\newtheorem{Thm}{Theorem}[section]
\newtheorem{theorem}[Thm]{Theorem}
\newtheorem{lemma}[Thm]{Lemma}
\newtheorem{corollary}[Thm]{Corollary}
\newtheorem{Prop}[Thm]{Proposition}
\theoremstyle{remark}
\newtheorem{Ex}[Thm]{Example}
\theoremstyle{definition}
\newtheorem{Def}[Thm]{Definition}
\newtheorem{Def-Pro}[Thm]{Definition-Proposition}
\newcommand{\Supp}{\mathop{\mathrm{Supp}}\nolimits}
\begin{document}

\title[semi-log-canonical models]
{ log-canonical models of singular pairs and its applications}

\author{Yuji Odaka}
\address{Research Institute for Mathematical Sciences, 
Kyoto University, Japan} 
\email{yodaka@kurims.kyoto-u.ac.jp}
\urladdr{\texttt{http://www.kurims.kyoto-u.ac.jp/\~{}yodaka}}

\author{Chenyang Xu}
\address{Beijing International Center of Mathematics Research, 5 Yiheyuan Road, Haidian District, Beijing 100871, China}
\address{Department of Mathematics\\
University of Utah\\
155 South 1400 East\\
Salt Lake City, UT 84112, USA}
\email{dr.chenyang.xu@gmail.com}
\date{August 10th, 2011. Revised: November 14th, 2011.}
\maketitle

\begin{abstract}
We prove the existence of the log canonical model over a log pair $(X,\Delta)$.
As an application, together with Koll\'ar's gluing theory, we remove the assumption in the first named author's work \cite{Odaka11}, which shows that $K$-semistable
polarized varieties can only have semi-log-canonical singularities.
\end{abstract}
\tableofcontents

\section{Introduction}
Throughout this paper, the ground field is assumed to be an algebraically closed field
of characteristic 0.
It is well known that a  normal surface singularity has the \textit{minimal resolution},
while for a singular variety of higher dimension, usually it does not have any such  ``canonically determined" smooth modification. But if we allow the partial resolution having mild singularities, a type of singularities coming from the minimal model program (MMP) which is natural for many questions,
then it is possible. More precisely, for an arbitrary normal variety $X$,
we can consider a unique (``canonically determined") partial resolution $Y\to X$ with only canonical singularities and satisfies the property that $K_Y$ is relative ample over $X$. The existence of such model $Y$, i.e., \textit{the canonical model over $X$}
\footnote{Here, the adjective ``canonical" comes from the sense of singularities. },
is implied by \cite[Main theorem (1.2)]{BCHM10}. In the case of surfaces, $Y$ is obtained by  contracting all exceptional curves with self-intersection $(-2)$ from the minimal resolution.

Similarly, for a normal pair $(X,\Delta)$ i.e., attached with a boundary $\mathbb{Q}$-divisor, we can define a ``canonically determined" partial resolution
$(Y,\Delta_Y)\to (X,\Delta)$ associated to it, which is called its \textit{log canonical model} (see \eqref{d-lc}).
It coincides with the relative log canonical model of a log resolution with
a reduced  boundary,
in the sense of the usual relative log MMP, as we will show in Lemma \ref{l-mmp}.

In this note, we study the question of the existence of  log canonical model of a normal  pair $(X,\Delta)$. It is well-known that the full log MMP (including the abundance conjecture) gives an affirmative answer to the question. As the full log MMP is still not established, our main observation in this note is that if we assume $K_X+\Delta$ is $\mathbb{Q}$-Cartier, then the existence of log canonical model follows from the established results on MMP, especially the recent ones in \cite{Birkar11} and \cite{HX11}.

\begin{theorem}\label{t-lcm}
 Let $(X,\Delta)$ be a normal pair,
i.e., $X$ is a normal variety and $\Delta=\sum a_i\Delta_i$ is a $\mathbb{Q}$-divisor
with distinct prime divisors $\Delta_i$ and rational numbers $a_i$.
Assume $0\le a_i \le 1$ and $K_X+\Delta$ is $\mathbb{Q}$-Cartier. Then there exists a \textit{log canonical model} $(Y,\Delta_Y)$ over $(X,\Delta)$ (see \eqref{d-lc} for the definition).
\end{theorem}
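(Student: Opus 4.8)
The plan is to reduce to constructing a relative log canonical model of a log resolution, and then to produce that model by running a relative minimal model program. First I would fix a log resolution $g\colon W\to X$ of $(X,\Delta)$ for which $\mathrm{Exc}(g)\cup g^{-1}_*\Delta$ has simple normal crossing support, and set $\Delta_W=g^{-1}_*\Delta+E$, where $E$ is the reduced exceptional divisor; then $(W,\Delta_W)$ is log smooth, in particular $\mathbb{Q}$-factorial and dlt. By Lemma~\ref{l-mmp} it suffices to show that the relative log canonical model of $(W,\Delta_W)$ over $X$ exists, i.e. that $\bigoplus_{m\ge 0}g_*\mathcal{O}_W(\lfloor m(K_W+\Delta_W)\rfloor)$ is a finitely generated $\mathcal{O}_X$-algebra; its relative $\Proj$ over $X$, equipped with the push-forward of $\Delta_W$, is then the log canonical model in the sense of \eqref{d-lc}. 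This is where the hypothesis enters: since $K_X+\Delta$ is $\mathbb{Q}$-Cartier we may pull it back, and
\[
K_W+\Delta_W\sim_{\mathbb{Q},X}F:=\textstyle\sum_i\big(a(E_i;X,\Delta)+1\big)E_i ,
\]
a $g$-exceptional $\mathbb{Q}$-divisor, which need not be effective — precisely when $(X,\Delta)$ is not log canonical. Note also that, $g$ being birational, $K_W+\Delta_W$ is automatically big and pseudo-effective over $X$, since these conditions only see the zero-dimensional generic fibre of $g$.

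Next I would run a $(K_W+\Delta_W)$-MMP over $X$ with scaling of a $g$-ample divisor $H$ (chosen so that $K_W+\Delta_W+H$ is nef over $X$). Each step is a step of the MMP for the $\mathbb{Q}$-factorial dlt pair $(W,\Delta_W)$; the cone and contraction theorems and the existence of the required log canonical flips in this non-klt situation are exactly what \cite{Birkar11} (and \cite{HX11}) supply, so the program is well defined, and all intermediate pairs stay $\mathbb{Q}$-factorial and dlt. Since $K_W+\Delta_W$ is pseudo-effective over $X$, no Mori fibre space can occur (the base $X$ is birational to $W$), so every step is a divisorial contraction or a flip. Moreover, because $\mathrm{Supp}(F)\subseteq \mathrm{Exc}(g)\subseteq\lfloor\Delta_W\rfloor$, the program is an instance of a special LMMP.

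The crux — and the step I expect to be the main obstacle — is termination of this MMP together with the assertion that its output $(Y,\Delta_Y)$ is a \emph{good} minimal model over $X$, i.e. that $K_Y+\Delta_Y$ is semi-ample over $X$. One cannot invoke \cite{BCHM10} directly here: $\Delta_W$ carries coefficient-one components (the exceptional divisors), so $(W,\Delta_W)$ is only dlt, not klt, and $F$ may have a nonzero negative part, so the log canonical model genuinely has to extract exceptional divisors sitting over the non-log-canonical locus of $(X,\Delta)$. Termination-with-a-good-minimal-model for dlt pairs in this setting — with $K_W+\Delta_W$ big over the base (indeed $\mathbb{Q}$-linearly equivalent over $X$ to a $g$-exceptional divisor) — is precisely what the special LMMP of \cite{Birkar11} and the results on log canonical closures and good minimal models of \cite{HX11} are designed to yield. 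Once such a good minimal model $(Y,\Delta_Y)$ is in hand, $\bigoplus_m (g_Y)_*\mathcal{O}_Y(\lfloor m(K_Y+\Delta_Y)\rfloor)$ is a finitely generated $\mathcal{O}_X$-algebra, hence so is $\bigoplus_m g_*\mathcal{O}_W(\lfloor m(K_W+\Delta_W)\rfloor)$ (the two coincide, the birational contraction $W\dashrightarrow Y$ being $(K_W+\Delta_W)$-non-positive), and its relative $\Proj$ over $X$ — equivalently the ample model of $K_Y+\Delta_Y$ over $X$ — is the sought log canonical model. It remains only to record the routine compatibilities: that every exceptional divisor of $Y\to X$ appears in $\Delta_Y$ with coefficient one and that $(Y,\Delta_Y)$ is log canonical — both automatic, since $Y$ is obtained from the log smooth pair $(W,\Delta_W)$ by an MMP.
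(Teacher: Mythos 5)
Your outline follows the paper's general strategy (reduce to a relative log canonical model of a log resolution via Lemma~\ref{l-mmp}, then run a relative MMP), but at the step you yourself identify as the crux there is a genuine gap: you assert that termination together with semi-ampleness of the output is ``precisely what'' \cite{Birkar11} and \cite{HX11} are designed to yield for the dlt pair $(W,\Delta_W)$. It is not. The relevant results (e.g.\ \cite[Theorem 1.1]{HX11}, \cite[Theorem 1.9]{Birkar11}) produce a good minimal model over $X$ only under the hypothesis that \emph{every} lc center of the dlt pair meets the preimage of an open locus over which a good minimal model is already known to exist (here $X^{lc}$). For $(W,\Delta_W)$ this hypothesis fails exactly in the interesting case: the reduced exceptional divisors lying over the non-log-canonical locus of $(X,\Delta)$ are coefficient-one components of $\Delta_W$, so they (and their strata) are lc centers mapping entirely into $X\setminus X^{lc}$. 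So the citation does not apply directly, and this is the whole difficulty the theorem has to overcome; writing ``special LMMP'' does not discharge it, since $\mathrm{Supp}(F)\subseteq\lfloor\Delta_W\rfloor$ alone is not one of the settings those papers resolve. (By contrast, the existence of the individual flips is not the issue: for a $\mathbb{Q}$-factorial dlt pair each flip is a klt flip for $(Z_j,(1-\delta)\Delta_j)$ and follows from \cite{BCHM10}.)

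The paper's proof fills this gap with several additional ideas that your proposal is missing. First, one runs the $(K_Z+\Delta_Z)$-MMP with scaling of an ample divisor over $X$, using klt perturbations $\Theta_t\sim_{\mathbb{Q}}\Delta_Z+tH$ and \cite{BCHM10} to show the scaling coefficients $s_j$ tend to $0$ and that any divisor in the diminished stable base locus ${\bf B_{-}}(K_Z+\Delta_Z/X)$ is contracted after finitely many steps. Second, via the Koll\'ar--Shokurov connectedness theorem one shows that on the resulting model $Z'$ every exceptional divisor with center in $X\setminus X^{lc}$ has coefficient $b_i>1$ in the crepant pullback. Third, one perturbs the boundary to $\Sigma=\sum E_i'-\epsilon\sum(b_i-1)E_i'$, which destroys precisely the lc centers lying over $X\setminus X^{lc}$ (Lemma~\ref{l-dlt}); only for this perturbed pair do \cite[Theorem 1.9]{Birkar11} or \cite[1.1]{HX11} apply, since over $X^{lc}$ the relative log canonical ring is the pullback of $\mathcal{O}_{X^{lc}}(m(K_{X^{lc}}+\Delta|_{X^{lc}}))$ and hence finitely generated. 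Finally, a convex-combination identity transfers the good minimal model of the perturbed pair back to $K_{Z'}+{g'}_*^{-1}(\Delta)+\sum E_i'$. Without some version of these steps (or another argument handling the lc centers over the non-lc locus), your proof does not go through.
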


As a consequence, we give a proof of the inversion of adjunction for log canonicity, which is a slight simplification of Hacon's argument in \cite{Hacon11} (also see \cite[4.11.2]{Kol}). 
We note that the inversion of adjunction for log canonicity was first proved by Kawakita (cf. \cite{Kawakita07}) without using the minimal model program.
\begin{corollary}[Inversion of Adjunction]\label{c-ia}
 Let $(X,D+\Delta)$ be a normal pair and $D$ a reduced divisor. Assume
$K_X+D+\Delta$ is $\mathbb{Q}$-Cartier. Let $n\colon D^n\to D$ be the normalization, and write $n^*(K_X+D+\Delta)|_{D}=K_{D^n}+\Delta_{D^n}$.

Then $(X,D+\Delta)$ is log canonical along $D$ if and only if $(D^n, \Delta_{D^n})$ is log canonical.
\end{corollary}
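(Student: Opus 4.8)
The forward implication is the classical adjunction inequality for log canonical pairs: if $(X,D+\Delta)$ is log canonical along $D$, then the different $(D^n,\Delta_{D^n})$ is log canonical (no MMP is needed here; see \cite{Kol}). The content is the converse, which I would deduce from Theorem~\ref{t-lcm}. Assume then that $(D^n,\Delta_{D^n})$ is log canonical; since the statement is local on $X$ we may work near a point of $D$.

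As $K_X+D+\Delta$ is $\mathbb{Q}$-Cartier, Theorem~\ref{t-lcm} gives a log canonical model $f\colon(Y,\Gamma_Y)\to(X,D+\Delta)$ (see \eqref{d-lc}), and by Lemma~\ref{l-mmp} we may take $\Gamma_Y=f^{-1}_{*}(D+\Delta)+\mathrm{Ex}(f)$, with $(Y,\Gamma_Y)$ log canonical and $K_Y+\Gamma_Y$ ample over $X$. Writing $K_Y+\Gamma_Y=f^{*}(K_X+D+\Delta)+\sum_E(1+a(E;X,D+\Delta))E$, the sum over $f$-exceptional prime divisors, the left side minus the $f$-trivial pullback is $f$-ample and $f$-exceptional, so the negativity lemma forces $\sum_E(1+a(E;X,D+\Delta))E\le 0$, i.e.\ $a(E;X,D+\Delta)\le -1$ for every $f$-exceptional $E$. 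Comparing the $f$-ample divisor $K_Y+\Gamma_Y$ with the $f$-trivial $f^{*}(K_X+D+\Delta)$ over each fibre, one sees that $(X,D+\Delta)$ is log canonical at a point $x$ if and only if $f$ is an isomorphism over a neighbourhood of $x$ (if not, $f^{-1}(x)$ is positive-dimensional and the $f$-ample anti-effective divisor above is nonzero on it, so some $f$-exceptional $E$ meeting $f^{-1}(x)$ has $a(E;X,D+\Delta)<-1$ and $x\in f(E)$). Hence it suffices to prove that $f$ is an isomorphism over a neighbourhood of $D$.

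Set $D_Y:=f^{-1}_{*}D$, a coefficient-one component of $\Gamma_Y$. Adjunction on the log canonical pair $(Y,\Gamma_Y)$ gives a log canonical pair $(D_Y^n,\Gamma_{D_Y})$ with $K_{D_Y^n}+\Gamma_{D_Y}=(K_Y+\Gamma_Y)|_{D_Y^n}$; as $D_Y^n\to D$ factors through $n\colon D^n\to D$ we get $g\colon D_Y^n\to D^n$, and restricting the crepant formula for $f$ yields
\[
K_{D_Y^n}+\Gamma_{D_Y}=g^{*}(K_{D^n}+\Delta_{D^n})+F,\qquad F:=\Bigl(\textstyle\sum_E(1+a(E;X,D+\Delta))E\Bigr)\big|_{D_Y^n}.
\]
Then $F\le 0$ (restriction of an anti-effective divisor, $D_Y$ being non-exceptional); $F$ is $g$-exceptional (compatibility of the different with birational pullback gives $g_{*}(K_{D_Y^n}+\Gamma_{D_Y})=K_{D^n}+\Delta_{D^n}$, so $g_{*}F=0$, and $F\le0$); and $F$ is $g$-ample (the divisor $\sum_E(1+a(E;X,D+\Delta))E$ is $f$-ample, its restriction to $D_Y$ is ample over $D$, and pulling back by the finite map $D_Y^n\to D_Y$ and using finiteness of $n$ keeps it ample over $D^n$). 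Now realize $f$ by a relative log MMP over $X$ of a common log resolution of $(X,D+\Delta)$ and $D$ carrying the reduced exceptional divisor, and restrict that MMP to the strict transform of $D$: by Koll\'ar's adjunction theory and \cite{HX11} this computes $(D_Y^n,\Gamma_{D_Y})$ as (after further resolving $D^n$ if needed) a log canonical model of $(D^n,\Delta_{D^n})$, so each $g$-exceptional prime divisor $C$ occurs in $\Gamma_{D_Y}$ with coefficient $1$. Comparing coefficients then gives $1+a(C;D^n,\Delta_{D^n})=\mathrm{coeff}_C F$, which is $\ge0$ (log canonicity of $(D^n,\Delta_{D^n})$) and $\le0$ ($F\le0$), so $\mathrm{coeff}_C F=0$, hence $F=0$. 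Thus $\bigl(\sum_E(1+a(E;X,D+\Delta))E\bigr)|_{D_Y}=0$; as all coefficients are $\le0$ and all $E|_{D_Y}\ge0$, every $f$-exceptional $E$ with $a(E;X,D+\Delta)<-1$ is disjoint from $D_Y$ in codimension one. A final connectedness argument — the fibres $f^{-1}(p)$, $p\in D$, are connected (Zariski's main theorem), meet $D_Y$, and carry the $f$-ample nonzero divisor $\sum_E(1+a(E;X,D+\Delta))E$ whenever $f$ is not an isomorphism there — forces that no $f$-exceptional divisor has centre meeting $D$, i.e.\ $f$ is an isomorphism over a neighbourhood of $D$, completing the proof.

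The crux is the last paragraph: pinning down the shape of the different $\Gamma_{D_Y}$ so that its $g$-exceptional part has coefficient exactly one (this is precisely where log canonicity of $(D^n,\Delta_{D^n})$ enters, through the compatibility of adjunction with the relative MMP as in \cite{HX11}), and then excluding $f$-exceptional divisors over $D$ that meet $D_Y$ only in high codimension — for which one needs the connectedness of $f^{-1}(p)$ together with the strict $f$-positivity of $K_Y+\Gamma_Y$.
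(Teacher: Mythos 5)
Your reduction to Theorem \ref{t-lcm} and the comparison of discrepancies on the strict transform $D_Y$ is the right family of argument (the paper does this too, but contrapositively), yet as written there are two genuine gaps. The first is your pivotal claim that every $g$-exceptional prime divisor $C\subset D_Y^n$ occurs in the different $\Gamma_{D_Y}$ with coefficient exactly $1$. You justify it by asserting that restricting the relative MMP to the strict transform of $D$ exhibits $(D_Y^n,\Gamma_{D_Y})$ as a log canonical model of $(D^n,\Delta_{D^n})$, citing ``Koll\'ar's adjunction theory and \cite{HX11}''. That assertion is essentially Corollary \ref{c-adj} of this paper, which is deduced there \emph{from} Corollary \ref{c-ia}; neither \cite{HX11} nor the adjunction formalism gives it as a black box (steps of an MMP on the ambient space do not restrict to steps of an MMP on the strict transform of a boundary component, and even granting some restriction statement one must still identify the outcome with the lc model of $(D^n,\Delta_{D^n})$, which is precisely the kind of statement under proof). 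Note also that if this claim were available, Proposition \ref{P-uni} applied to the lc pair $(D^n,\Delta_{D^n})$, whose lc model over itself is the identity, would force $g$ to be an isomorphism outright — a sign that the unproved claim carries all the content. The coefficient-one statement can be obtained cheaply, but from Lemma \ref{l-con} instead: $\mathrm{Ex}(f)=\mathrm{Supp}(B^{>1})$ is purely divisorial, each such divisor has coefficient $1$ in $\Gamma_Y$, so the different coefficient along any $C$ mapping into $\mathrm{Ex}(f)$ is $\ge 1$, and it is $\le 1$ because $(Y,\Gamma_Y)$ is lc; your proposal never invokes \ref{l-con} and supplies no substitute.

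The second gap is the endgame. From $F=0$ you conclude only that each $E$ with $a(E;X,D+\Delta)<-1$ meets $D_Y$ in codimension $\ge 2$, while your connectedness argument, when $f$ is not an isomorphism over some $p\in D$, produces merely a point of $D_Y\cap\mathrm{Supp}\bigl(\sum_E(1+a(E))E\bigr)$ over $p$ — which is perfectly compatible with a codimension-$\ge 2$ intersection, so no contradiction is reached as written. What closes it is that $\sum_E(1+a(E))E=K_Y+\Gamma_Y-f^*(K_X+D+\Delta)$ is $\mathbb{Q}$-Cartier, so if its support met $D_Y$ at all the intersection would be purely divisorial in $D_Y$, i.e.\ $F\neq 0$; with that observation $F=0$ gives genuine disjointness and your last step works, but you never make it. This is exactly the point the paper's proof handles via Lemma \ref{l-con} together with the crepant identity $K_{D_Y^n}+B_{D_Y^n}=g^*(K_{D^n}+\Delta_{D^n})$, run contrapositively and much more directly: if $f$ is not an isomorphism over $D$, some component of $B$ with coefficient $>1$ passes through a codimension-one point of $D_Y$, forcing a coefficient $>1$ in the different and hence non-log-canonicity of $(D^n,\Delta_{D^n})$ — with no coefficient-one claim and no $F=0$ detour needed.
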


We can also extend our results into non-normal setting.
In fact, Koll\'ar recently has developed a rather complete theory of  semi-log-canonical pairs by studying their normalizations. Thanks to his fundamental theory (see \cite{Kol}), including his recent result \cite{Kol11}, we have the following as a consequence, which generalizes Theorem
\ref{t-lcm}.

\begin{corollary}\label{t-slcm}
 Let $(X,\Delta)$ be a demi-normal pair where $\Delta=\sum a_i\Delta_i$ is a $\mathbb{Q}$-divisor, none of prime divisor $\Delta_i$ are in the singular locus ${\rm Sing}(X)$. Assume $0\le a_i \le 1$ and $K_X+\Delta$ is $\mathbb{Q}$-Cartier. Then the semi-log-canonical
model $(Y,\Delta_Y)$ over $(X,\Delta)$ exists.
 \end{corollary}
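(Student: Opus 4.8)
The plan is to reduce the non-normal statement to the normal case established in Theorem \ref{t-lcm} by passing to the normalization and then gluing, following Koll\'ar's theory of semi-log-canonical pairs. First I would let $\nu\colon \bar X\to X$ be the normalization and let $\bar D\subset \bar X$ be the conductor divisor; since $(X,\Delta)$ is demi-normal and the components of $\Delta$ avoid $\mathrm{Sing}(X)$, the pullback $\bar\Delta:=\nu^{-1}_*\Delta$ is well defined and one has the standard adjunction formula $\nu^*(K_X+\Delta)=K_{\bar X}+\bar D+\bar\Delta$, with $K_{\bar X}+\bar D+\bar\Delta$ being $\mathbb Q$-Cartier because $K_X+\Delta$ is. Applying Theorem \ref{t-lcm} to each connected component of the normal pair $(\bar X,\bar D+\bar\Delta)$ (which may be reducible, so one runs the argument componentwise) produces the log canonical model $f\colon(\bar Y,\bar D_Y+\bar\Delta_Y)\to(\bar X,\bar D+\bar\Delta)$, where $\bar D_Y$ and $\bar\Delta_Y$ are the strict transforms of $\bar D$ and $\bar\Delta$; by construction $(\bar Y,\bar D_Y+\bar\Delta_Y)$ is log canonical, $K_{\bar Y}+\bar D_Y+\bar\Delta_Y$ is relatively ample over $\bar X$ hence over $X$, and $\bar Y\to \bar X$ is an isomorphism where $(\bar X,\bar D+\bar\Delta)$ is already log canonical.

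The second step is to descend the involution. The normalization $\bar X\to X$ carries the data of a generically fixed-point-free involution $\tau$ of the conductor $\bar D^n$ (the normalization of $\bar D$) compatible with the different, and the pair $(X,\Delta)$ is recovered as the quotient of $(\bar X,\bar D+\bar\Delta)$ by this gluing datum. Since the log canonical model $\bar Y\to \bar X$ is canonically determined, it is equivariant for any automorphism of $(\bar X,\bar D+\bar\Delta)$; in particular $\tau$ lifts to an involution $\tau_Y$ of the normalization of $\bar D_Y$, and one checks that $\tau_Y$ is again a gluing datum in Koll\'ar's sense, i.e. it is compatible with the differents on $\bar D_Y^n$. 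This is exactly where Koll\'ar's recent descent results (\cite{Kol}, and \cite{Kol11} for the remaining properness/quotient-existence input) are used: they guarantee that the quotient $Y:=\bar Y/\tau_Y$ exists as a demi-normal variety, that $\bar Y\to Y$ is its normalization with conductor $\bar D_Y$, and that $K_Y+\Delta_Y$ (with $\Delta_Y$ the pushforward of $\bar\Delta_Y$) is $\mathbb Q$-Cartier with $\nu_Y^*(K_Y+\Delta_Y)=K_{\bar Y}+\bar D_Y+\bar\Delta_Y$.

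Finally I would verify that the resulting $(Y,\Delta_Y)\to(X,\Delta)$ satisfies the defining properties of the semi-log-canonical model: that $(Y,\Delta_Y)$ is semi-log-canonical follows from the log canonicity of $(\bar Y,\bar D_Y+\bar\Delta_Y)$ together with the characterization of slc singularities via the normalization; that the morphism is projective and $K_Y+\Delta_Y$ is relatively ample follows by descending ampleness of $K_{\bar Y}+\bar D_Y+\bar\Delta_Y$ along the finite surjection $\bar Y\to Y$ (ampleness descends along finite maps); and that it is an isomorphism over the slc locus of $(X,\Delta)$ follows from the corresponding statement upstairs. The main obstacle is the middle step — showing the gluing involution genuinely descends, i.e. that $\tau_Y$ respects the differents and that the categorical quotient exists as a scheme of finite type — but this is precisely the content of Koll\'ar's gluing theory, so the work here is to set up the hypotheses of his theorems rather than to prove new geometry.
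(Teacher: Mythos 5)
Your overall strategy coincides with the paper's: normalize, apply Theorem \eqref{t-lcm} to $(\bar X,\bar D+\bar\Delta)$, lift the gluing involution to the log canonical model, and invoke Koll\'ar's gluing theorem \cite[26]{Kol11}. However, the step in which you lift the involution contains a genuine gap. The gluing datum $\tau$ is an involution of the normalized conductor $\bar D^n$ only; it is not (and in general does not extend to) an automorphism of the pair $(\bar X,\bar D+\bar\Delta)$ --- it may even interchange pieces of the conductor lying on different irreducible components of $\bar X$. So the argument ``the log canonical model is canonically determined, hence equivariant under automorphisms of $(\bar X,\bar D+\bar\Delta)$, in particular $\tau$ lifts'' is a non sequitur: a priori $\tau$ only induces a \emph{birational} self-map of $D_Y^n$, and the whole point is to show it is a regular involution compatible with the different. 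This is exactly where the paper needs real input, namely Corollary \eqref{c-adj}: the restriction $D_Y^n\to (D^n,\Delta_{D^n})$ of the log canonical model to the normalized conductor is itself the log canonical model of the conductor pair, with $K_{D^n_Y}+\Delta_{D^n_Y}=n^*\bigl((K_{\bar Y}+\bar D_Y+\bar\Delta_Y)|_{\bar D_Y}\bigr)$. Since $\tau$ is an automorphism of $(D^n,\Delta_{D^n})$ (here the $\mathbb{Q}$-Cartier hypothesis on $K_X+\Delta$ enters, as the boundary on $D^n$ is pulled back from $X$ and hence $\tau$-invariant), the uniqueness statement \eqref{P-uni} applied to the lower-dimensional pair gives the lift $\sigma_Y$, and the displayed adjunction formula gives the compatibility with differents required by \cite[26]{Kol11}. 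Corollary \eqref{c-adj} in turn rests on Lemma \eqref{l-con} (${\rm Ex}(f)={\rm Supp}(B^{>1})$), i.e.\ on the inversion-of-adjunction circle of ideas, and is not a formal consequence of canonicity of the model upstairs.

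That this cannot be waved away is shown by Koll\'ar's Example \eqref{ex} in the paper: without the $\mathbb{Q}$-Cartier mechanism the log canonical models over the components induce a birational but non-regular map between the modified conductors, so the involution does not lift and no slc model exists. Your proposal, as written, never uses the ingredient that rules out this behavior, so the ``middle step'' you flag as the main obstacle is precisely the missing mathematical content rather than a routine verification of hypotheses.
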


Recall that \textit{demi-normality} of $X$ means that it is normal crossing
in codimension $1$ and satisfies Serre's $S_2$ condition \cite[5.1]{Kol}.
For the precise definition of \textit{semi-log-canonical model}, see
Definition \ref{slc-mod}.

One of our main applications for this note is the following:
In \cite{Odaka11} the first named author proved $K$-semi-stability implies semi-log canonicity, assuming  the existence of semi-log-canonical models.   Since  \eqref{t-slcm} verifies this  assumption, the following theorem now becomes unconditional.
\begin{theorem}[\cite{Odaka11}] Let $X$ be an equidimensional reduced projective variety, satisfies $S_2$ condition and whose codimension 1 points are Gorenstein. Thus we can define the Weil divisor class $K_X$ which we assume to be $\mathbb{Q}$-Cartier. 

Then, if  $(X,L)$ is  $K$-semistable, $X$ has only semi-log-canonical singularities.
\end{theorem}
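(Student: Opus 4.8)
The plan is simply to feed Corollary~\ref{t-slcm} into the first named author's argument in \cite{Odaka11}, removing its only unestablished input. Recall that \cite{Odaka11} proves that a $K$-semistable polarized variety has semi-log-canonical singularities \emph{conditionally on the existence of the semi-log-canonical model over $X$}; under the present hypotheses $K_X$ is $\mathbb{Q}$-Cartier and $X$ is $S_2$, so as soon as $X$ is known to be demi-normal, Corollary~\ref{t-slcm} applied to $(X,0)$ produces exactly that model. It therefore remains to recall the shape of the argument of \cite{Odaka11} and to check that, with Corollary~\ref{t-slcm} in hand, every step is unconditional.

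I would argue by contradiction. Assume $X$ is not semi-log-canonical. First, following \cite{Odaka11}, one disposes of codimension one: testing $(X,L)$ against the deformation-to-the-normal-cone test configuration along a non-nodal codimension one point yields a negative Donaldson--Futaki invariant, so $K$-semistability forces all codimension one points of $X$ to be nodal; together with the $S_2$ hypothesis this makes $X$ demi-normal. By Koll\'ar's gluing theory (\cite{Kol}), if $\nu\colon\bar X\to X$ is the normalization and $\bar D$ the conductor divisor then $K_{\bar X}+\bar D=\nu^*K_X$ is $\mathbb{Q}$-Cartier and $X$ is semi-log-canonical if and only if $(\bar X,\bar D)$ is log canonical. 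Now apply Corollary~\ref{t-slcm} to $(X,0)$ (equivalently Theorem~\ref{t-lcm} to $(\bar X,\bar D)$): it gives the semi-log-canonical model $\pi\colon(Y,\Delta_Y)\to X$. Since $X$ is not semi-log-canonical, $\pi$ is not an isomorphism. Writing $K_Y+\Delta_Y-\pi^*K_X=-N$ (which makes sense because $K_X$ is $\mathbb{Q}$-Cartier), the $\mathbb{Q}$-divisor $-N$ is $\pi$-ample, hence $N$ is effective, nonzero and $\pi$-exceptional by the negativity lemma, and it is supported along the $\pi$-exceptional divisors whose log discrepancy over $X$ is negative.

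The remaining, technically central step --- carried out in \cite{Odaka11} --- is to convert $\pi$ into a destabilizing one-parameter degeneration. One forms the Rees-algebra, i.e.\ deformation-to-the-normal-cone, construction for an ideal sheaf whose blow-up recovers $Y\to X$, and twists the relative polarization by a small multiple of the exceptional divisor $-N$; this yields a $\mathbb{G}_m$-equivariant flat family $\mathcal X\to\mathbb{A}^1$ with an ample $\mathcal L$ restricting to $L$ on the general fibre, i.e.\ a test configuration. Expanding its Donaldson--Futaki invariant via the intersection-number recipe on $Y$ expresses $\DF(\mathcal X,\mathcal L)$, up to a fixed positive constant, through intersection numbers of the $\pi$-ample class $-N=K_Y+\Delta_Y-\pi^*K_X$; because every coefficient of $N$ is positive (the relevant log discrepancies being $<0$), this forces $\DF(\mathcal X,\mathcal L)<0$, contradicting $K$-semistability. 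Alternatively one may apply the discrepancy-based destabilization of \cite{Odaka11} directly to the log canonical model of $(\bar X,\bar D)$ and descend along the gluing, bypassing the explicit test configuration.

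The only real obstacle here was never the (substantial but established) test-configuration computation of \cite{Odaka11}: it was the \emph{existence}, in the merely $\mathbb{Q}$-Gorenstein and possibly non-normal setting, of the semi-log-canonical model $\pi\colon(Y,\Delta_Y)\to X$. This is precisely what Corollary~\ref{t-slcm}, built on Theorem~\ref{t-lcm}, supplies, and the hypothesis that $K_X$ be $\mathbb{Q}$-Cartier is used essentially both there and in forming $\pi^*K_X$ above. Hence the theorem of \cite{Odaka11} holds unconditionally.
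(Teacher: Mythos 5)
Your proposal is correct and follows essentially the same route as the paper: the paper likewise treats the theorem as the conditional result of \cite{Odaka11} (non-slc $X$ yields a destabilizing test configuration built from the semi-log-canonical model), now made unconditional by supplying the existence of that model via Corollary \ref{t-slcm}. Your extra sketch of the codimension-one reduction to demi-normality and of the Donaldson--Futaki computation is consistent with what the paper delegates to \cite{Odaka11}.
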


Roughly speaking, assuming the non-semi-log-canonicity of $X$,
\cite{Odaka11} proved that we can construct ``destabilizing test configuration"
by using the semi-log-canonical model of $X$.
We refer to \cite{Odaka11} for more details. 

\section{Log canonical models}\label{pr}

\begin{Def}\label{d-lc}
 Let $(X,\Delta)$ be a normal pair,
i.e., $X$ is a normal variety and $\Delta=\sum a_i\Delta_i$ is a $\mathbb{Q}$-divisor
with distinct prime divisors $\Delta$ and rational numbers $a_i$.
Assume $0\le a_i \le 1$.
We call that a birational projective morphism $f: Y\to (X,\Delta)$  give  {\it a log canonical model} over $(X,\Delta)$  if with  the divisor $\Delta_Y=f^{-1}_*(\Delta_X)+E_f^{lc}$ on $Y$, where $E_f^{lc}$ denotes the sum of $f$-exceptional prime divisors with coefficients $1$, the pair $(Y,\Delta_Y)$ satisfies
\begin{enumerate}
\item[(1)] $(Y,\Delta_Y)$ is log canonical,
\item[(2)] $K_Y+\Delta_Y$ is ample over $X$.
\end{enumerate}

\end{Def}

From the negativity lemma (cf. \cite[3.38]{KM98}), we know that $f: Y \to X$ is isomorphic over the maximal open locus $X^{lc}$  on which $(X,\Delta)$ is log canonical (see the proof of \eqref{l-con}).
For more background of log canonical models over a pair $(X,\Delta)$, see \cite[Section 2]{Koletc92}. 

First, we discuss the uniqueness of the log-canonical model. 

\begin{lemma}\label{l-mmp}
Let $\tilde{f}: \tilde{Y}\to X$ be a log resolution of $(X,\Delta)$.
Assume that $(\tilde{Y},\Delta_{\tilde{Y}}:=\tilde{f}^{-1}_*\Delta+\sum E_i)$ has a relative log canonical model $(Y,\Delta_Y)$ over $X$, where $E_i$ run over all $\tilde{f}$-exceptional prime divisors. Then $Y\to (X,\Delta)$ is a log canonical model over $(X,\Delta)$.
\end{lemma}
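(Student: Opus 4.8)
The plan is to realize the assumed relative log canonical model as the output of a relative log MMP and then match its data against Definition \eqref{d-lc}. Write $g\colon \tilde Y \dashrightarrow Y$ for the associated birational contraction and $f\colon Y\to X$ for the structure morphism. By the very definition of the relative log canonical model of $(\tilde Y,\Delta_{\tilde Y})$ over $X$, we have $\Delta_Y=g_*\Delta_{\tilde Y}$, the divisor $K_Y+\Delta_Y$ is ample over $X$, and $a(E;\tilde Y,\Delta_{\tilde Y})\le a(E;Y,\Delta_Y)$ for every divisorial valuation $E$ (with strict inequality when $E$ is $g$-exceptional). The ampleness statement is exactly condition (2) of \eqref{d-lc}, and $f$ is a birational projective morphism with $Y$ normal, so nothing more is needed there.

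Next I would verify condition (1). Since $\tilde Y$ is smooth and $\Delta_{\tilde Y}=\tilde f^{-1}_*\Delta+\sum E_i$ is an snc divisor all of whose coefficients lie in $[0,1]$, the pair $(\tilde Y,\Delta_{\tilde Y})$ is log canonical (in fact dlt). The discrepancy inequality above then forces every discrepancy over $Y$ to be $\ge -1$, so $(Y,\Delta_Y)$ is log canonical.

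The one place that needs genuine care is checking that the boundary $\Delta_Y$ produced by MMP is literally $f^{-1}_*\Delta+E_f^{lc}$. Pushing forward, $g_*(\tilde f^{-1}_*\Delta)=f^{-1}_*\Delta$ and $g_*(\sum E_i)=\sum_{E_i\not\subset\operatorname{Exc}(g)}E_i$, with coefficients unchanged, hence still equal to $1$ on each surviving $E_i$. I would then show that the surviving $E_i$ are precisely the $f$-exceptional prime divisors of $Y$: if $E_i$ is not contracted by $g$, then $f(g(E_i))=\tilde f(E_i)$ has codimension $\ge 2$ in $X$, so $g(E_i)$ is $f$-exceptional; conversely, since $g$ is a birational contraction, every prime divisor $F$ on $Y$ is the strict transform of a prime divisor $\tilde F$ on $\tilde Y$, and if $F$ is $f$-exceptional then $\tilde f(\tilde F)=f(F)$ has codimension $\ge 2$, so $\tilde F$ is $\tilde f$-exceptional, i.e. $\tilde F=E_i$ for some $i$. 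Thus $g_*(\sum E_i)=E_f^{lc}$ and $\Delta_Y=f^{-1}_*\Delta+E_f^{lc}$, as required.

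Combining these, $(Y,\Delta_Y)\to(X,\Delta)$ is a birational projective morphism whose boundary is the prescribed one and which satisfies (1) and (2) of Definition \eqref{d-lc}, hence a log canonical model over $(X,\Delta)$. I do not expect a serious obstacle: the argument is essentially an unwinding of the MMP notion of relative log canonical model, and the only subtle point is the bookkeeping of exceptional divisors under the birational contraction $g$, namely that no divisor is extracted and that the coefficient-$1$ divisors remain coefficient $1$ after pushforward.
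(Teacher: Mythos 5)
Your proposal is correct and follows the same route as the paper, which simply observes that conditions (1) and (2) of Definition \eqref{d-lc} hold by the definition of the relative log canonical model; you merely fill in the routine bookkeeping (that $g_*\Delta_{\tilde Y}=f^{-1}_*\Delta+E_f^{lc}$ and that log canonicity descends via the discrepancy comparison), which the paper leaves implicit. The parenthetical claim of strict discrepancy inequality for $g$-exceptional divisors is unnecessary and not used, so it does not affect the argument.
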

\begin{proof}By the definition of the relative log canonical model, $(Y,\Delta_Y)$ obviously satisfies the conditions (1) and (2).
\end{proof}

\begin{Prop}\label{P-uni}
If log canonical model $Y$ exists, then it is unique. 
\end{Prop}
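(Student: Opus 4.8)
The plan is to prove uniqueness by the standard two-sided comparison argument for canonical models, exploiting the fact that the boundary on a log canonical model is completely determined by the discrepancy function. Suppose $f_1\colon Y_1\to X$ and $f_2\colon Y_2\to X$ are both log canonical models over $(X,\Delta)$, with boundaries $\Delta_{Y_i}=(f_i)^{-1}_*\Delta+E_{f_i}^{lc}$. First I would pass to a common resolution: choose a smooth variety $W$ with projective birational morphisms $p_i\colon W\to Y_i$ resolving the indeterminacy of $Y_1\dashrightarrow Y_2$, so that $g:=f_1\circ p_1=f_2\circ p_2\colon W\to X$. On $W$ write $p_i^*(K_{Y_i}+\Delta_{Y_i})=K_W+\Gamma_i$, where $\Gamma_i$ is the $\mathbb{R}$-divisor recording the discrepancies; the key point is that, because each $(Y_i,\Delta_{Y_i})$ is log canonical and carries \emph{every} exceptional divisor of $f_i$ with coefficient exactly $1$, the coefficient of a prime divisor $F$ on $W$ in $\Gamma_i$ equals $-a(F;X,\Delta)$ truncated at $1$ wherever $F$ is $g$-exceptional or lies over $X$ suitably — more precisely $\Gamma_1$ and $\Gamma_2$ agree as $\mathbb{R}$-divisors on $W$, since both equal $g^{-1}_*\Delta$ plus the sum of all $g$-exceptional divisors that are not already covered, each with coefficient $\min\{1,\dots\}$ depending only on $(X,\Delta)$ and not on $i$. (This is where one uses that $K_X+\Delta$ is $\mathbb{Q}$-Cartier, so that pullback to $W$ is unambiguous and discrepancies are well defined.)

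Granting $\Gamma_1=\Gamma_2=:\Gamma$, the remaining step is a double application of the negativity lemma, exactly as in the proof of uniqueness of relative canonical models in \cite[Section 3]{KM98} or \cite[Section 2]{Koletc92}. We have $p_1^*(K_{Y_1}+\Delta_{Y_1})=K_W+\Gamma=p_2^*(K_{Y_2}+\Delta_{Y_2})$, hence $p_1^*(K_{Y_1}+\Delta_{Y_1})-p_2^*(K_{Y_2}+\Delta_{Y_2})=0$. Viewing this over $Y_2$ (so that $p_2$ is the relevant contraction) and using that $K_{Y_1}+\Delta_{Y_1}$ is $f_1$-ample, the negativity lemma forces the induced birational map $\phi\colon Y_1\dashrightarrow Y_2$ to be a morphism; symmetrically, using $f_2$-ampleness of $K_{Y_2}+\Delta_{Y_2}$, its inverse is a morphism. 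Therefore $\phi$ is an isomorphism, and it visibly carries $\Delta_{Y_1}$ to $\Delta_{Y_2}$ since both are the strict transform of $\Delta$ plus the reduced exceptional divisor. This gives uniqueness as pairs, which is what is claimed.

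I would present the argument invoking Lemma \ref{l-mmp} to identify each $Y_i$ with a relative log canonical model of a fixed log resolution $(\tilde Y,\Delta_{\tilde Y})$: once both $Y_i$ are realized as relative log canonical models of the \emph{same} klt-type (indeed dlt) pair over $X$, uniqueness is literally the classical statement that a relative log canonical model, when it exists, is unique, so the proof can be compressed to ``apply Lemma \ref{l-mmp}, then cite the uniqueness of relative log canonical models.'' The main obstacle — and the only genuinely delicate point — is verifying that both log canonical models arise from one and the same log resolution with one and the same boundary, i.e., that the identity $\Gamma_1=\Gamma_2$ on $W$ really does hold; this is where care is needed because a priori $f_1$ and $f_2$ extract different sets of exceptional divisors, and one must check that the divisors extracted by $f_i$ but contracted by the other still appear in $\Gamma_j$ with coefficient $1$, which follows from log canonicity of $(Y_j,\Delta_{Y_j})$ together with the definition forcing all exceptional coefficients to be $1$. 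Once this bookkeeping is done the rest is formal.
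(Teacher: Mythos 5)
Your overall strategy (compare the two models on a common resolution $W$, show they are crepant, then use ampleness over $X$ to identify them) is a legitimate alternative to the paper's argument, which instead identifies any log canonical model with $\Proj$ over $X$ of the relative log canonical ring of a log resolution with reduced exceptional boundary and uses that this ring is independent of the resolution. However, your justification of the key identity $\Gamma_1=\Gamma_2$ is where the proof breaks down. The coefficient of a prime divisor $F$ on $W$ in $\Gamma_i$ is $-a(F;Y_i,\Delta_{Y_i})$, and this is \emph{not} determined by $(X,\Delta)$: over the non-lc locus the pairs $(Y_i,\Delta_{Y_i})$ are not crepant pullbacks of $(X,\Delta)$ (that is the whole point of the construction, and $(X,\Delta)$ need not be lc), so ``$-a(F;X,\Delta)$ truncated at $1$'' is not what appears in $\Gamma_i$. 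Moreover, for a divisor $E$ extracted by $f_1$ but contracted by $p_2$, log canonicity of $(Y_2,\Delta_{Y_2})$ only gives $\mathrm{coeff}_E\,\Gamma_2=-a(E;Y_2,\Delta_{Y_2})\le 1$, not $=1$; the definition forces coefficient $1$ only for exceptional divisors that survive on $Y_2$, and says nothing about divisors visible only on $W$ or on $Y_1$. So the ``bookkeeping'' you defer is essentially the statement being proved, and it is not supplied.

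The standard repair is to obtain $\Gamma_1=\Gamma_2$ from the negativity lemma itself, i.e.\ negativity must come \emph{before}, not after, the crepant identity: set $D:=p_1^*(K_{Y_1}+\Delta_{Y_1})-p_2^*(K_{Y_2}+\Delta_{Y_2})$. Since $K_{Y_2}+\Delta_{Y_2}$ is nef over $X$, $-D$ is $p_1$-nef, and $p_{1*}D=\Delta_{Y_1}-p_{1*}\Gamma_2\ge 0$ because each coefficient of $p_{1*}\Gamma_2$ is at most the corresponding coefficient of $\Delta_{Y_1}$ (here lc-ness of $(Y_2,\Delta_{Y_2})$ is used, but only for the inequality $\le 1$ on divisors contracted by $p_2$). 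The negativity lemma gives $D\ge 0$, and the symmetric argument gives $D\le 0$, so $D=0$; then relative ampleness shows every $p_1$-contracted curve is $p_2$-contracted and conversely, whence $Y_1\cong Y_2$ over $X$ with matching boundaries. Finally, your proposed compression ``apply Lemma \ref{l-mmp}, then cite uniqueness of relative log canonical models'' has the same gap in disguise: Lemma \ref{l-mmp} says that the relative lc model of a log resolution, if it exists, is a log canonical model over $(X,\Delta)$; it does not say that an arbitrary log canonical model over $(X,\Delta)$ arises this way, and establishing exactly that (by comparing the relative log canonical rings, as the paper does) is the substance of the paper's proof.
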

\begin{proof}Let $g:\tilde{Y}\to Y$ be a log resolution of $(Y,f^{-1}_*(\Delta)+{\rm Ex}(f))$. And we write 
$$g^*(K_Y+\Delta_Y)+E\sim_{\mathbb{Q}}K_{\tilde{Y}}+F,$$
such that $E,\ F\ge 0$, have no common components. It is easy to see that $g_*(F)=\Delta$.  
 Since $(Y,\Delta_Y)$ is log canonical $$\tilde{f}^{-1}_*\Delta+\sum E_i\ge F,$$ where $\tilde{f}=f\circ g$ and $E_i$ run over all $\tilde{f}$-exceptional prime divisors. The difference is $g$-exceptional. We conclude that
$${\rm Proj} \bigoplus _{m\in \mathbb{Z}_{\geq0}} \tilde{f}_*\mathcal{O}_{\tilde{Y}}(m(K_{\tilde{Y}}+\Delta_{\tilde{Y}}))\cong{\rm Proj} \bigoplus _{m\in \mathbb{Z}_{\geq0}} f_*\mathcal{O}_{Y}(m(K_{Y}+\Delta_{Y}))\cong Y,$$
as $K_Y+\Delta_Y$ is ample over $X$. So it suffices to show that the different log resolutions as in \eqref{l-mmp} will yield the same log canonical model $Y$.

We assume that there are two difference choices $g_i\colon (\tilde{Y}_i,\Delta_{\tilde{Y_i}}) \to (X,\Delta)$ $(i=1,2)$ with a morphism $\mu:\tilde{Y}_1\to \tilde{Y}_2$. Since $\mu^*(K_{\tilde{Y}_2}+\Delta_{\tilde{Y_2}})+E'=K_{\tilde{Y}_1}+\Delta_{\tilde{Y_1}}$ for some effective exceptional divisor $E'$. The uniqueness immediately follows from the fact that $(\tilde{Y}_i,\Delta_{\tilde{Y_i}})$ have the same relative log canonical ring
(sheaf)$$\bigoplus _{m\in \mathbb{Z}_{\geq0}} (g_i)_*\mathcal{O}_{\tilde{Y_i}}(m(K_{\tilde{Y_i}}+\Delta_{\tilde{Y_i}}))$$ over $X$.
\end{proof}

\begin{lemma}\label{l-con}Let $(X,\Delta)$ be a pair as in \eqref{d-lc}. We assume that  $K_X+\Delta$ is $\mathbb{Q}$-Cartier.
Let $f:Y\to (X,\Delta)$ be the log canonical model. Write
$${f}^*(K_X+\Delta)\sim_{\mathbb{Q}}K_{Y}+B,$$
and $B=\sum b_iB_i$  as the sum of distinct prime divisors such that $f_*(B)=\Delta$, we let  $B^{>1}$ be the nonzero divisor $\sum_{b_i>1}b_iB_i$ and $B^{\le1}$ be the divisor $\sum_{b_i\le 1}b_iB_i$,
then ${\rm Supp}(B^{>1})={\rm Ex}(f)$. In particular, ${\rm Ex}(f)\subset Y$ is of pure codimension 1.
\end{lemma}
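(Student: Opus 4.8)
The plan is to extract information from the birational morphism $f\colon Y\to X$ via the negativity lemma, using the two defining properties of the log canonical model. First I would observe that since $K_X+\Delta$ is $\mathbb{Q}$-Cartier, the pullback $f^*(K_X+\Delta)$ makes sense, and we may write $f^*(K_X+\Delta)\sim_{\mathbb{Q}} K_Y+B$ where $B=\sum b_i B_i$ with $f_*B=\Delta$ (the exceptional divisors appear in $B$ with whatever coefficients the ramification formula dictates, possibly $>1$, possibly $\le 1$). Comparing with $K_Y+\Delta_Y$, where $\Delta_Y=f^{-1}_*\Delta+E_f^{lc}$, we get
\[
  K_Y+\Delta_Y \;=\; f^*(K_X+\Delta) + (\Delta_Y - B)\;=\;f^*(K_X+\Delta)+\bigl(E_f^{lc}-E_{f,B}\bigr),
\]
where $E_{f,B}$ is the $f$-exceptional part of $B$; the non-exceptional parts of $\Delta_Y$ and $B$ both equal $f^{-1}_*\Delta$ and cancel. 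Since $K_Y+\Delta_Y$ is ample over $X$ and $f^*(K_X+\Delta)$ is numerically trivial over $X$, the divisor $E_f^{lc}-E_{f,B}$ is $f$-ample; being $f$-exceptional and $f$-ample over a normal base, the negativity lemma (in the form that an $f$-exceptional $\mathbb{Q}$-divisor which is $f$-nef must be $\le 0$, applied to its negative) forces $E_f^{lc}-E_{f,B}\le 0$, i.e. $E_{f,B}\ge E_f^{lc}$, which says $b_i\ge 1$ for every $f$-exceptional prime divisor $B_i$. This already gives $\mathrm{Ex}(f)\subseteq\mathrm{Supp}(B^{>1})$ except possibly along components with $b_i=1$ exactly — and these I would handle by noting that if some exceptional $B_i$ had $b_i=1$ then $E_f^{lc}-E_{f,B}$ would vanish along it, contradicting strict $f$-ampleness unless $B_i$ is not actually contracted; more carefully, $f$-ampleness of $E_f^{lc}-E_{f,B}$ is strict, so its support must contain every prime divisor contracted by $f$, forcing $b_i>1$ strictly for all exceptional $B_i$.

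For the reverse inclusion $\mathrm{Supp}(B^{>1})\subseteq\mathrm{Ex}(f)$: a prime divisor $B_i$ with $b_i>1$ cannot be the strict transform of a component $\Delta_j$ of $\Delta$, since the strict transform of $\Delta_j$ appears in $B$ with exactly the coefficient $a_j\le 1$ (the log pullback does not change coefficients of non-exceptional divisors). Hence any $B_i$ with $b_i>1$ is $f$-exceptional, giving $\mathrm{Supp}(B^{>1})\subseteq\mathrm{Ex}(f)$, and combined with the previous paragraph, $\mathrm{Supp}(B^{>1})=\mathrm{Ex}(f)$. The final assertion, that $\mathrm{Ex}(f)$ is of pure codimension $1$, is then immediate: $\mathrm{Ex}(f)=\mathrm{Supp}(B^{>1})$ is by construction a sum of prime divisors, hence pure of codimension $1$.

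I expect the main obstacle to be the careful bookkeeping in the negativity-lemma step — specifically, verifying that $E_f^{lc}-E_{f,B}$ is genuinely $f$-exceptional (it is, since both terms are supported on $\mathrm{Ex}(f)$) and pinning down the \emph{strict} inequality $b_i>1$ rather than merely $b_i\ge 1$. The strictness is what actually identifies $\mathrm{Supp}(B^{>1})$ with all of $\mathrm{Ex}(f)$, and it comes from the relative \emph{ampleness} (not just nefness) of $K_Y+\Delta_Y$ over $X$: an $f$-ample divisor on $Y$ restricts nontrivially to every curve contracted by $f$, hence its support meets, and therefore contains, every $f$-exceptional divisor. One should also double-check that $B^{>1}$ is nonzero exactly when $f$ is not an isomorphism, which is consistent with the remark after Definition \ref{d-lc} that $f$ is an isomorphism over the log canonical locus $X^{lc}$.
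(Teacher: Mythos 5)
Your first two steps are essentially the paper's own argument: comparing $K_Y+\Delta_Y$ with $f^*(K_X+\Delta)$, applying the negativity lemma to the $f$-ample exceptional divisor $E_f^{lc}-E_{f,B}$ to get $b_i\ge 1$ on exceptional divisors, and noting that non-exceptional components of $B$ keep their coefficients $a_j\le 1$, which gives ${\rm Supp}(B^{>1})\subseteq {\rm Ex}(f)$. The gap is in the final assertion. What you actually prove is that every $f$-exceptional \emph{prime divisor} has $b_i>1$, i.e.\ that the divisorial part of ${\rm Ex}(f)$ coincides with ${\rm Supp}(B^{>1})$. But ${\rm Ex}(f)$ is the whole locus where $f$ fails to be an isomorphism, and a priori it could have components of codimension $\ge 2$ (as for a small contraction, where curves are contracted but no divisor is). Excluding such components is precisely the content of the ``in particular'' clause, and it is what gets used later (in the proof of \eqref{c-ia} one needs every contracted point of $D_Y$ to lie on ${\rm Supp}(B^{>1})$). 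So your closing claim that ``${\rm Ex}(f)={\rm Supp}(B^{>1})$ is then immediate'' assumes what is to be proved: nothing in your argument prevents ${\rm Ex}(f)$ from strictly containing ${\rm Supp}(B^{>1})$ along a codimension $\ge 2$ exceptional set.

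The fix is short, and it is how the paper argues: apply your ampleness/effectivity observation to \emph{every} curve $C$ contracted by $f$, not only to curves sweeping out exceptional divisors. Since
\[
K_Y+\Delta_Y\sim_{\mathbb{Q},X}{\rm Supp}(B^{>1})-B^{>1}=-\sum_{b_i>1}(b_i-1)B_i
\]
is $f$-ample, any contracted curve $C$ satisfies $C\cdot\sum_{b_i>1}(b_i-1)B_i<0$; as this divisor is effective, $C\subset{\rm Supp}(B^{>1})$. Because $X$ is normal, Zariski's main theorem shows that every point of ${\rm Ex}(f)$ lies on a positive-dimensional fiber of $f$, hence on a contracted curve; therefore ${\rm Ex}(f)\subseteq{\rm Supp}(B^{>1})$, which together with your reverse inclusion gives the equality and the pure codimension $1$ statement.
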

\begin{proof} It is obvious that ${\rm Supp}(B^{>1})\subset{\rm Ex}(f)$.

If we write $B=f_*^{-1}(\Delta)+E_B$, then $E_B$ is supported on the exceptional locus and the divisor $E_f^{lc}-E_B$ is an exceptional divisor which is relatively ample.  It follows from the negativity lemma (cf. \cite[3.38]{KM98}) that $E_f^{lc}-E_B\le 0$. Therefore, we have the equality
$$f_*^{-1}(\Delta)+E_f^{lc}=B^{\le 1}+{\rm Supp}(B^{>1}).$$

From the definition of the log canonical model (\ref{d-lc}), we know that 
\begin{equation}\label{e-eq}
K_{Y}+{\rm Supp}(B^{>1})+B^{\le 1}\sim_{\mathbb{Q},X} {\rm Supp}(B^{>1})-B^{>1}=\sum_{b_i>1}(1-b_i)B_i
\end{equation} is relatively ample over $X$.
Thus for any curve $C$ which is contracted by $f$, we have
$$C\cdot  (\sum_{b_i>1}(b_i-1)B_i)<0,$$
which implies that $C\subset {\rm Supp}(B^{>1})$.
This shows ${\rm Ex}(f)\subset \Supp(B^{>1})$ which completes the proof.
\end{proof}

\begin{proof}[Proof of \eqref{t-lcm}]
We take a ($\mathbb{Q}$-factorial) dlt modification $g:Z\to X$ of $(X,\Delta)$ (cf.\ \cite[Section 3]{KK10}or \cite[4.1]{Fujino10})
such that
\begin{enumerate}
\item if we write $g^*(K_X+\Delta)\sim_{\mathbb{Q}}K_Z+g_*^{-1}(\Delta)+\sum b_iE_i$, then $b_i\ge 1$;
\item $(Z,\Delta_Z=g_*^{-1}(\Delta)+\sum E_i )$ is dlt.
\end{enumerate}

We remark $Z$ can be achieved by running a sequence of $(K_{\tilde{Y}}+\Delta_{\tilde{Y}})$-MMP over $X$ for a log resolution of $\tilde{f}:\tilde{Y}\to (X,\Delta)$ (cf. \cite[4.1]{Fujino10}) where $\Delta_{\tilde{Y}}$ is defined as in \eqref{l-mmp}. Furthermore, we require that $\tilde{f}^{-1}(X\setminus X^{lc})$ is a divisor. We want to show $(Z,\Delta_Z=g_*^{-1}(\Delta)+\sum E_i )$ has a good minimal model over $X$. Since then we can take $Y$ to be the relative log canonical model, which is easy to see it is the log canonical model of $(X,\Delta)$.

\begin{lemma}\label{l-dlt}
Let $V$ be a log canonical center of $(Z,{g}_*^{-1}(\Delta)+\sum E_i)$, such that $g(V)\subset X\setminus X^{lc}$, then $V\subset E_i $ for some $i$.
\end{lemma}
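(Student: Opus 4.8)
The plan is to argue by contradiction: suppose $V$ is a log canonical center of $(Z, g_*^{-1}(\Delta)+\sum E_i)$ with $g(V)\subset X\setminus X^{lc}$, but $V\not\subset E_i$ for every $i$. Since the $E_i$ are exactly the $g$-exceptional divisors, the condition $V\not\subset \operatorname{Ex}(g)$ forces $g$ to be an isomorphism in a neighborhood of the generic point of $V$; in particular the generic point $\eta_V$ maps isomorphically to its image, and $(Z,\Delta_Z)$ near $\eta_V$ agrees with $(X,\Delta)$ near $g(\eta_V)$. The first step is therefore to translate ``$V$ is an lc center'' into the local statement that $(X,\Delta)$ fails to be klt, or at least is strictly log canonical, at the generic point of $g(V)$ — more precisely, there is a divisor over $X$ with discrepancy $\le -1$ whose center on $X$ meets $g(V)$ but which is \emph{not} one of the $E_i$.

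The key point is then that, since $(Z,\Delta_Z)$ is dlt, any lc center $V$ of $(Z,\Delta_Z)$ with $V\not\subset\operatorname{Ex}(g)$ must be an lc center lying in the snc locus of $(Z,\Delta_Z)$, hence a stratum of the boundary $\Delta_Z=g_*^{-1}(\Delta)+\sum E_i$; being disjoint (at its generic point) from $\operatorname{Ex}(g)=\bigcup E_i$, it must be a stratum of $g_*^{-1}(\Delta)$ alone, i.e.\ a component of an intersection $\bigcap_{j\in J} g_*^{-1}(\Delta_j)$ with every coefficient $a_j=1$. But then, pushing forward by the local isomorphism $g$, the image $g(V)$ is the corresponding stratum of the reduced part of $\Delta$ on $X$, and near the generic point of $g(V)$ the pair $(X,\Delta)$ is itself snc with a reduced boundary through that point — in particular $(X,\Delta)$ is log canonical at the generic point of $g(V)$. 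This contradicts $g(V)\subset X\setminus X^{lc}$ and completes the argument.

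I would organize the write-up as: (i) recall that for a $\mathbb{Q}$-factorial dlt pair every lc center is the image of a stratum of the reduced boundary and that generically along such a center the pair is snc (this is standard dlt structure theory, e.g.\ \cite{Kol}); (ii) use $\operatorname{Ex}(g)=\bigcup_i E_i$ together with $V\not\subset E_i$ for all $i$ to see $g$ is a local isomorphism at $\eta_V$ and that $V$ is (generically) a stratum of $g_*^{-1}(\Delta)$ only; (iii) transport the snc, reduced-boundary picture along $g$ to conclude $(X,\Delta)$ is lc at $\eta_{g(V)}$, i.e.\ $g(V)\cap X^{lc}\ne\emptyset$, contradicting the hypothesis.

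The main obstacle I anticipate is step (ii): one has to be careful that even though $V$ itself is not contained in any $E_i$, it could a priori still \emph{intersect} some $E_i$, so that near $\eta_V$ the boundary $\Delta_Z$ has components other than the $g_*^{-1}(\Delta_j)$; one must check that adding such an $E_i$ through $\eta_V$ does not spoil the conclusion, or else shrink to the open locus where $g$ is an isomorphism and argue there. A clean way is to pass to the generic point of $V$ and work on the local ring, where $g_*^{-1}(\Delta)+\sum E_i$ restricted to a neighborhood of $\eta_V$ has $\sum E_i$ not passing through $\eta_V$ (as $V\not\subset E_i$), so only the $g_*^{-1}(\Delta_j)$ survive and the isomorphism $g$ does the rest.
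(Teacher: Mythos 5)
Your proof is correct, but it takes a genuinely different route from the paper's. The paper does not argue intrinsically on $Z$: it uses that $Z$ is obtained by an MMP over $X$ from a log resolution $\tilde f\colon \tilde Y\to X$ chosen so that $\tilde f^{-1}(X\setminus X^{lc})$ is a divisor, identifies every lc center $V$ of $(Z,\Delta_Z)$ with a stratum $W$ of $\lfloor\Delta_{\tilde Y}\rfloor$ over whose generic point $\tilde Y\dashrightarrow Z$ is an isomorphism, and then observes that $W\subset\tilde f^{-1}(X\setminus X^{lc})$ forces $W$ to lie in some $\tilde f$-exceptional divisor over $X\setminus X^{lc}$, whose birational transform on $Z$ contains $V$. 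You instead stay on $Z$: if $V$ lies in no $E_i$ then its generic point $\eta_V$ avoids ${\rm Ex}(g)$, $g$ is an isomorphism over an open neighborhood of $g(\eta_V)$, and transporting the structure of $(Z,\Delta_Z)$ there shows $(X,\Delta)$ is log canonical near $g(\eta_V)$, contradicting $g(V)\subset X\setminus X^{lc}$. This works, with two points you should make explicit. First, ``$V\not\subset E_i$ for all $i$'' only gives ``$\eta_V\notin{\rm Ex}(g)$'' because the dlt modification $Z$ is $\mathbb{Q}$-factorial, so ${\rm Ex}(g)$ has pure codimension one and equals $\bigcup_i E_i$ (cf. \cite[2.63]{KM98}); without this, ${\rm Ex}(g)$ could have components of higher codimension not covered by the $E_i$. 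Second, your detour through the dlt stratification and snc-ness at $\eta_V$ is not needed: over the open locus where $g$ is an isomorphism the $E_i$ do not appear, so $(X,\Delta)$ is isomorphic there to $(Z,\Delta_Z)$, which is dlt hence log canonical, and this already puts $g(\eta_V)$ in $X^{lc}$; in particular your argument proves the conclusion for any irreducible closed subvariety $V$ with $g(V)\subset X\setminus X^{lc}$, not only lc centers, and it does not use the auxiliary requirement that $\tilde f^{-1}(X\setminus X^{lc})$ be a divisor. What the paper's route buys is consistency with its bookkeeping through the MMP from $\tilde Y$ (the same description of lc centers is what gets reused when the lemma is applied to $Z'$); what yours buys is a statement and proof depending only on $(Z,\Delta_Z)$ being a $\mathbb{Q}$-factorial dlt modification, isolating the local reason that $g$ cannot be an isomorphism over any point of $X\setminus X^{lc}$.
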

\begin{proof}As $Z$ is obtained by running a sequence of MMP for a log smooth resolution $\tilde{f}: \tilde{Y}\to (X,\Delta)$, then $V$ is an lc center of  $(Z,{g}_*^{-1}(\Delta)+\sum E_i)$ if and only if $\tilde{Y}\dashrightarrow Z$ is isomorphic over the generic point $V$ and the preimage $W$ of $V$ in $\tilde{Y}$ is a component of $\cap F_i$, where $F_i$'s are prime divisors contained in $\lfloor \Delta_{\tilde{Y}}\rfloor$.
As by our assumption $\tilde{f}^{-1}(X\setminus X^{lc})$ is a union of divisors, if $\tilde{f}(W)\subset X\setminus X^{lc}$, then $W$ is contained in one of the $\tilde{f}$-exceptional divisors $\tilde{E}_i$ whose image is in $X\setminus X^{lc}$. Therefore. $V$ is contained the birational transform of $\tilde{E}_i$ on $Z$ as $\tilde{Y}\dashrightarrow Z$ is an isomorphism on the generic point of $\tilde{E}_i$.
\end{proof}

Now consider all exceptional divisors $E$ of $g$ with the centers contained in $X\setminus X^{lc}$. Fixing a general relatively ample effective divisor $H$ on $Z$ over $X$, we run $(K_Z+\Delta_Z)$-MMP with scaling of $H$ over $X$ (cf., \cite[subsection 3.10]{BCHM10}). As we treat dlt pairs, which are not klt, we explain what follows
from \cite{BCHM10} in the following Lemma for the readers' convenience.

\begin{lemma}
We can run the MMP with scaling of $H$ for $(Z,\Delta_Z)$ over $X$
to get a sequence of numbers $0\le...\le s_{2}\le s_1\le s_0$ and a sequence of birational models $$Z=Z_0\dasharrow Z_1\dasharrow Z_2\dasharrow\cdots, $$ such that
the following holds. Here, $\Delta_j$ and $H_j$ are push-forwards of $\Delta$ and $H$
on each $Z_j$.

{\rm (i)}$K_{Z_j}+\Delta_j+tH_j$ is semi-ample over $X$ for any $s_j\ge t \ge s_{j+1}$.

{\rm (ii)}This sequence $\{s_i\}$ $($is either finite with $\exists s_N=0$
or$)$ satisfies the property that $\lim_j s_j=0$.

\end{lemma}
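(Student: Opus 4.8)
The plan is to obtain the asserted data as an ordinary run of the $(K_Z+\Delta_Z)$-MMP with scaling of $H$ over $X$, and to read off (i) and (ii) from the foundational results of \cite{BCHM10} (together with \cite{Birkar11} and \cite{HX11}), after the now standard reduction from the dlt to the klt case. First I would choose $s_0$ with $K_Z+\Delta_Z+s_0H$ nef over $X$ — such $s_0$ exists because $H$ is relatively ample over $X$ and $K_Z+\Delta_Z$ is $\mathbb{Q}$-Cartier — and take $s_0$ to be the nef threshold. If $s_0=0$ we are already in the finite case with $N=0$. Otherwise, the relative cone and contraction theorem for the $\mathbb{Q}$-factorial dlt pair $(Z,\Delta_Z)$ produces a $(K_Z+\Delta_Z)$-negative extremal ray over $X$ on which $K_Z+\Delta_Z+s_0H$ is trivial; its contraction is divisorial or small, and in the small case the flip exists by \cite{BCHM10}. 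Since $g\colon Z\to X$ is birational, no fibre-type contraction can occur over $X$, so either way one obtains a new $\mathbb{Q}$-factorial dlt pair $(Z_1,\Delta_1)$ with $K_{Z_1}+\Delta_1+s_0H_1$ still nef over $X$; passing to the new (no larger) nef threshold $s_1$ and iterating produces $Z=Z_0\dasharrow Z_1\dasharrow\cdots$ together with $s_0\ge s_1\ge s_2\ge\cdots\ge 0$, the process stopping precisely when the current threshold reaches $0$, i.e. when $K_{Z_j}+\Delta_j$ becomes nef over $X$.

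For (i): on $Z_j$ the class $K_{Z_j}+\Delta_j+tH_j$ is nef over $X$ at $t=s_j$ (inherited from the previous step) and at $t=s_{j+1}$ (the new threshold, the relative nef cone being closed), hence for every $t\in[s_{j+1},s_j]$ by convexity of the relative nef cone. For $t>0$ it is moreover semi-ample over $X$ by the relative base-point-free theorem: a high multiple $a(K_{Z_j}+\Delta_j+tH_j)$ minus $K_{Z_j}+\Delta_j$ equals $(a-1)(K_{Z_j}+\Delta_j+tH_j)+tH_j$, which is nef and big over $X$ for $a\gg0$, since $H_j$ is big over $X$ (being the birational transform of the relatively ample $H$). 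When the process terminates one obtains in addition that $K_{Z_N}+\Delta_N$ is nef over $X$, which is the relative minimal model we are after.

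For (ii): if the sequence is finite we are done, so assume it is infinite. Then every threshold is positive and they decrease to some $s_\infty\ge 0$; suppose for contradiction $s_\infty>0$ and fix a rational $\mu$ with $0<\mu<\min(s_\infty,1)$. At the $j$-th step one contracts a $(K_{Z_j}+\Delta_j)$-negative ray $R_j$ with $(K_{Z_j}+\Delta_j+s_{j+1}H_j)\cdot R_j=0$, so $H_j\cdot R_j>0$ and therefore $(K_{Z_j}+\Delta_j+\mu H_j)\cdot R_j=(\mu-s_{j+1})H_j\cdot R_j<0$, while $K_{Z_j}+\Delta_j+s_jH_j=(K_{Z_j}+\Delta_j+\mu H_j)+(s_j-\mu)H_j$ is still nef over $X$; thus the very same sequence $Z_0\dasharrow Z_1\dasharrow\cdots$ is a run of the $(K_Z+\Delta_Z+\mu H)$-MMP with scaling of $H$ over $X$ (with thresholds $s_j-\mu$). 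Now $(Z,\Delta_Z+\mu H)$ is $\mathbb{Q}$-factorial dlt ($H$ being general and $\mu<1$) with boundary $\Delta_Z+\mu H$ big over $X$, since it dominates the relatively ample $\mu H$; replacing $\Delta_Z+\mu H$ by $\Delta_Z'$ with $(Z,\Delta_Z')$ $\mathbb{Q}$-factorial klt, $\Delta_Z'$ big over $X$, and $K_Z+\Delta_Z'\sim_{\mathbb{Q},X}K_Z+\Delta_Z+\mu H$ (possible by the usual perturbation, using bigness over $X$ and generality of $H$), our infinite sequence becomes a run of the MMP with scaling of $H$ for a $\mathbb{Q}$-factorial klt pair with boundary big over $X$, which must terminate by \cite{BCHM10} (see also \cite{Birkar11}). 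This contradiction shows $s_\infty=0$, i.e. $\lim_j s_j=0$.

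The genuine content, beyond quoting \cite{BCHM10}, is twofold. First, one must check that the dlt MMP steps really exist — existence of flips and of the divisorial contractions for $\mathbb{Q}$-factorial dlt pairs, together with the preservation of $\mathbb{Q}$-factoriality and of the dlt condition — which I would handle by the standard reduction to the klt case. Second, and this is the step I expect to be the main obstacle, is the termination input used in (ii): its point is that enlarging the boundary by a small multiple of $H$ turns it into a boundary big over $X$, bringing the problem within the scope of the strong termination theorems of \cite{BCHM10}; this perturbation, as well as the one used for semi-ampleness in (i), is legitimate only because $H$ was taken general, which is exactly why $H$ is fixed as a general relatively ample divisor from the outset.
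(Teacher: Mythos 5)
Your construction of the run (nef thresholds, no fiber-type contractions over the birational base, existence of flips by passing to the klt pair $(Z_j,(1-\delta)\Delta_j)$) and your proof of (ii) (re-reading the sequence as an MMP with scaling for a perturbed klt pair with boundary big over $X$ and quoting the termination results of \cite{BCHM10}) coincide with the paper's argument. The genuine gap is the semi-ampleness step in (i). You apply the relative base-point-free theorem to $D_t=K_{Z_j}+\Delta_j+tH_j$ using the auxiliary divisor $aD_t-(K_{Z_j}+\Delta_j)=(a-1)D_t+tH_j$, claiming it is nef and big over $X$ for $a\gg 0$ ``since $H_j$ is big over $X$''. Bigness is automatic here ($Z_j\to X$ is birational) and is not the issue: nefness is, and it fails. $H_j$ is only the push-forward of $H$, and after the first flip it is in general no longer nef over $X$: if $Z_{j-1}\dashrightarrow Z_j$ is a flip of a ray $R_{j-1}$ with $(K_{Z_{j-1}}+\Delta_{j-1}+s_jH_{j-1})\cdot R_{j-1}=0$, then any flipped curve $C\subset Z_j$ satisfies $(K_{Z_j}+\Delta_j+s_jH_j)\cdot C=0$ and $H_j\cdot C<0$, so at $t=s_j$ your divisor $(a-1)D_{s_j}+s_jH_j$ has negative degree on $C$ for every $a$. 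Even for $s_{j+1}<t<s_j$, nefness of $(a-1)D_t+tH_j$ for large $a$ does not follow from nefness of $D_t$, because the curve classes on which $D_t$ vanishes and $H_j$ could misbehave lie in the $(K_{Z_j}+\Delta_j)$-nonnegative part of the relative cone of curves, where the cone theorem gives no rational polyhedrality. A secondary point: the base-point-free theorem you invoke (as in \cite{KM98}) is stated for klt pairs, whereas $(Z_j,\Delta_j+tH_j)$ is only dlt, so some perturbation would be needed even if the positivity were in place.

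The repair is exactly the mechanism you already deploy in (ii), and it is the paper's proof of (i): for fixed $t>0$ choose, on $Z$ where $H$ is ample over $X$, a divisor $\Theta_t\sim_{\mathbb{Q}}\Delta_Z+tH$ with $(Z,\Theta_t)$ klt and $\Theta_t$ big over $X$ (absorb the coefficient-one part of $\Delta_Z$ into a general member of the ample class $\epsilon\Delta_Z+tH$). The truncated run $Z=Z_0\dasharrow\cdots\dasharrow Z_j$, for $s_j\ge t\ge s_{j+1}$, is also a run of the $(K_Z+\Theta_t)$-MMP with scaling of $H$ over $X$; by \cite[Corollary 1.4.2, Theorem 1.2]{BCHM10} such a run terminates in a relative good minimal model, so $K_{Z_j}+{\rho_j}_*\Theta_t\sim_{\mathbb{Q},X}K_{Z_j}+\Delta_j+tH_j$ is semi-ample over $X$. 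With (i) argued this way, the rest of your write-up matches the paper's proof.
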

\begin{proof}
For each $Z_j$, we set $$s_{j+1}:=\inf \{t>0 \mid K_{Z_{j}}+\Delta_{j}+tH_{j} \mbox{ is relatively nef over }X \}$$ and consider the extremal contraction of an extremal ray $R_j$ with $(K_{Z_j}+\Delta_{j}+s_{j+1}H_{j})\cdot R_j=0$.
In each step the existence of flip  holds
since as $K_{Z_j}+\Delta_j$ is dlt and $Z_{j}$ is $\mathbb{Q}$-factorial, $(K_{Z_j}+\Delta_j)$-flip is the same as a step of $(K_{Z_j}+(1-\delta)\Delta_j)$-MMP for $0<\delta\ll 1$ and $({Z_j}, (1-\delta)\Delta_j)$ is klt (cf., \cite[Corollary 1.4.1]{BCHM10}).

From our construction, we know that giving a sequence of $j$ steps
 $$Z=Z_0\dasharrow Z_1\dasharrow Z_2\dasharrow Z_j $$
of $(K_Z+\Delta)$-MMP with scaling of $H$ as above is the same as giving a sequence of steps of $(K_Z+\Delta+tH)$-MMP with scaling of $H$ for any $0\le t<s_j$.

 For arbitrary $t>0$, there exists an effective divisor $\Theta_t\sim_{\mathbb{Q}}\Delta_Z+tH$, such that $(Z,\Theta_t)$ is klt
(with $\Theta_t$ is relatively big, which is trivial in this case
since $Z$ is birational over $X$).
It follows from \cite[Corollary 1.4.2, see also Theorem 1.2]{BCHM10}
 that  any sequence of $(K_Z+\Theta_t)$-MMP with scaling of $H$ over $X$ will terminate after finite steps  with a relative good minimal model $Z_j$, i.e., $K_{Z_j}+{\rho_j}_*(\Theta_t)$ is semi-ample over $X$ where $\rho_j: Z\dasharrow Z_j$ is the birational contraction. (Recall that \textit{good minimal model} means a
minimal model which satisfies the abundance conjecture.) Thus, {\rm (i)} is proved.

Moreover, from the arguments above, there are only finitely many $s_j$ such that $s_j> t$. Since we can choose $t$ to be an arbitrarily small positive number, we also have the conclusion {\rm (ii)}.
\end{proof}

The {\it diminished stable base locus}
\footnote{also called {\it restricted stable base locus}. }
of $(Z,\Delta_Z)$ over $Z$ is defined by
$${\bf B_{-}}(K_Z+\Delta_Z/X)=\bigcup _{\epsilon>0}{\bf B}(K_Z+\Delta_Z+\epsilon H/X),  $$where ${\bf B}(\cdot)$ denotes the usual stable base locus.
If there is a divisor $E\subset {\bf B_{-}}(K_Z+\Delta_Z/X) $, then $E\subset {\bf B}(K_Z+\Delta_Z+tH/X)$ for some $t>0$, therefore there exists an $j$, such that $s_j \ge t \ge s_{j+1 }$.
Since
$$K_{Z_j}+ \Delta_j+tH_j\sim_{\mathbb{Q}_j}K_{Z_j}+{\rho_j}_*\Theta_t$$ is semiample over $X$
we know that $\rho_j$ contracts $E$.
\begin{lemma}
There exists $Z_j$ such that if we denote by $Z'=Z_j$, $\rho'=\rho_j$, the morphism $g':Z'\to X$ and write
$${g'}^*(K_X+\Delta)=K_{Z'}+{g'}_*^{-1}(\Delta)+\sum b_{i}E'_i,$$ then $b_i>1$ for all $E'_i$ which centers in $X\setminus X^{lc}$. 
\end{lemma}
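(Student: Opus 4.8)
The plan is to keep running the $(K_Z+\Delta_Z)$-MMP with scaling of $H$ over $X$ and to show that after finitely many steps it has contracted every $g$-exceptional prime divisor $E$ on $Z$ with $b_E=1$ whose centre lies in $X\setminus X^{lc}$; call such a divisor a \emph{superfluous place} (it is an lc place of $(X,\Delta)$ sitting over the non-lc locus). Any model $Z_j$ past that stage will then serve as the required $Z'$.

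For the reduction: on $Z=Z_0$ there are only finitely many superfluous places; each step $\rho_j$ of the MMP contracts a divisor or is a flip, and never extracts a divisor, so no new superfluous place is ever created; and $b_i=-a(E'_i;X,\Delta)$ is an invariant of the valuation and of $(X,\Delta)$, hence is unchanged along the MMP. So once all superfluous places have been contracted, on the resulting model every surviving exceptional $E'_i$ with centre in $X\setminus X^{lc}$ has $b_i\ge 1$ and $b_i\neq 1$, i.e.\ $b_i>1$, which is the assertion. By the paragraph preceding the lemma, any prime divisor contained in ${\bf B_{-}}(K_Z+\Delta_Z/X)$ is contracted by some $\rho_j$; hence it suffices to show that every superfluous place $E$ on $Z$ lies in ${\bf B_{-}}(K_Z+\Delta_Z/X)$, and then to take $Z'=Z_{j_0}$, $\rho'=\rho_{j_0}$, $g'=g_{j_0}$ with $j_0$ the largest of the finitely many indices $j_E$ for which $\rho_{j_E}$ contracts a superfluous place $E$.

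To establish that claim, write $g^{*}(K_X+\Delta)=K_Z+g_*^{-1}(\Delta)+\sum b_iE_i$, so that $K_Z+\Delta_Z\sim_{\mathbb{Q},X}-N$ with $N:=\sum_{b_i>1}(b_i-1)E_i\ge 0$ a $g$-exceptional divisor; a superfluous place $E$ is not contained in ${\rm Supp}(N)$. Let $\eta$ be the generic point of $g(E)$. Since $(Z,\Delta_Z)$ is lc but $(X,\Delta)$ is not lc at $\eta$, the divisor $N$ cannot be trivial over $\eta$: otherwise $g$, localized at $\eta$, would be a crepant proper birational morphism from the lc pair $(Z_\eta,\Delta_{Z_\eta})$ onto $(X_\eta,\Delta_\eta)$, forcing $\eta\in X^{lc}$. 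Hence some component $E_v$ of $N$ has $g(E_v)\supseteq g(E)$, and $E$, $E_v$ are both components of $\lfloor\Delta_Z\rfloor$. Using the connectedness of $\NKLT(Z,\Delta_Z)$ along the fibres of $g$ over $\eta$, together with Lemma \ref{l-dlt} (which forces the lower-dimensional links of any such chain to be intersections of the $E_i$), one gets that $E$ is not isolated in $\lfloor\Delta_Z\rfloor$ over $\eta$ but is connected to ${\rm Supp}(N)$; running this through the general fibres of $g|_E$ shows that $E$ is covered by $g$-contracted curves $C$ with $N\cdot C>0$. For such $C$, $(K_Z+\Delta_Z+\epsilon H)\cdot C=-N\cdot C+\epsilon H\cdot C<0$ as soon as $\epsilon>0$ is small (the $H$-degrees being bounded in a covering family), so $C$, and hence $E$, lies in ${\bf B}(K_Z+\Delta_Z+\epsilon H/X)$ for all small $\epsilon$; therefore $E\subseteq{\bf B_{-}}(K_Z+\Delta_Z/X)$.

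I expect the genuine difficulty to be the middle step above: converting $g(E)\subseteq X\setminus X^{lc}$ into an actual covering family of $g$-contracted curves through the general point of $E$ that meet ${\rm Supp}(N)$ — a local inversion-of-adjunction/connectedness input which must be supplied directly, since Corollary \ref{c-ia} is only deduced afterwards from Theorem \ref{t-lcm}. The other ingredients — finiteness of superfluous places, invariance of the $b_i$, non-creation of new divisors, and the established fact that diminished-stable-base-locus divisors get contracted by the scaled MMP — are routine.
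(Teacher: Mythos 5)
Your reduction is sound and matches the framework of the paper: only finitely many divisors can be contracted, none are extracted, the log discrepancies $b_i$ are invariants of the valuations, and the paragraph preceding the lemma guarantees that any divisor lying in ${\bf B}_{-}(K_Z+\Delta_Z/X)$ is contracted at some finite stage. The genuine gap is exactly where you flag it: your justification of the key claim that every $g$-exceptional $E$ with $b_E=1$ and center in $X\setminus X^{lc}$ lies in ${\bf B}_{-}(K_Z+\Delta_Z/X)$ is not an argument. Connectedness of the non-klt locus in the fibres of $g$ over $\eta$ only produces a chain of coefficient-one components joining the fibre of $E$ to ${\rm Supp}(N)$; it does not force $E$ itself (let alone the general fibre of $g|_E$) to meet ${\rm Supp}(N)$. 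The chain may pass through other components of $\lfloor\Delta_Z\rfloor$, including non-exceptional ones or exceptional ones whose centre is not contained in $X\setminus X^{lc}$, to which Lemma \ref{l-dlt} says nothing. If $E\cap{\rm Supp}(N)=\emptyset$, then every $g$-contracted curve $C\subset E$ has $N\cdot C=0$, so $(K_Z+\Delta_Z+\epsilon H)\cdot C=\epsilon H\cdot C>0$ and your negativity computation yields nothing; no covering family of the required kind need exist on $Z$. So the central step is missing, as you yourself concede, and the claim you reduce to (though true a posteriori, since it follows from the lemma itself) is not established by your sketch.

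The paper circumvents precisely this difficulty by not proving your divisor-by-divisor statement on $Z$. It first passes to a model $Z'$ on which ${\bf B}_{-}(K_{Z'}+\Delta_{Z'}/X)$ contains no divisor at all (possible by the finiteness just discussed), and then argues by contradiction: if some exceptional divisor with $b=1$ had centre in $X\setminus X^{lc}$, the Koll\'ar--Shokurov connectedness theorem applied on $Z'$ produces \emph{some} coefficient-one divisor $E_0'$ --- not necessarily the one you started with, and possibly a component of the strict transform of $\Delta$ rather than an exceptional divisor --- on which $\sum_{b_i>1}E_i'$ restricts nontrivially; then, using \eqref{e-eq}, the restriction $(\sum(1-b_i)E_i'+\epsilon H')|_{E_0'}$ cannot be effective for $0<\epsilon\ll 1$, which places $E_0'$ inside the diminished base locus and contradicts the choice of $Z'$. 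The flexibility of letting the divisor change along the chain, and of running the contradiction on $Z'$ where the base-locus hypothesis is already clean, is exactly the ingredient your direct approach on $Z$ lacks; without supplying a substitute (some local connectedness/inversion-of-adjunction input strong enough to produce your covering family), the proposal does not prove the lemma.
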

\begin{proof}
From the above discussion, we can assume that there is $Z_j=Z'$ such that ${\bf B}_{-}(K_{Z'}+{g'}_*^{-1}(\Delta)+\sum E'_i)$ has codimension at least 2. By \eqref{e-eq}, we have
$$K_{Z'}+\rho'_*(\Delta_Z)=K_{Z'}+{g'}_*^{-1}(\Delta)+\sum E'_i\sim_{\mathbb{Q},X}\sum(1-b_i)E'_i,$$  if the statement is not true,
it follows from the Koll\'ar-Shokurov's Connectedness Theorem (cf. \cite[17.4]{Koletc92}) that there is a divisor $E_0'$, with $b_0=1$ such that $\sum_{b_i>1}E'_i|_{E_0'}$ is not trivial.
Therefore, $$(\sum(1-b_i)E'_i+\epsilon H')|_{E_0'}$$ is not effective for small $0<\epsilon \ll 1$, where $H':=\rho_*H$. This implies that $E_0'\subset  {\bf B_{-}}(K_Z+\Delta_Z/X)$, which yields a contradiction.
Then we conclude that $b_i>1$ for all $E'_i$ whose center is in $X\setminus X^{lc}$.
\end{proof}

Now consider the dlt pair $(Z',{g'}_*^{-1}(\Delta)+\Sigma)$, where $\Sigma=\sum E'_i-\epsilon\sum(b_i-1)E'_i$ for some positive $\epsilon \ll 1$.

\begin{lemma} $(Z',{g'}_*^{-1}(\Delta)+\Sigma)$  has a good minimal model $Y'$ over $X$.
\end{lemma}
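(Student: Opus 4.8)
The goal is to produce a good minimal model over $X$ for the dlt pair $(Z', {g'}_*^{-1}(\Delta) + \Sigma)$, where $\Sigma = \sum E'_i - \epsilon\sum(b_i - 1)E'_i$ for $0 < \epsilon \ll 1$. The point of choosing $\Sigma$ this way is that $Z'$ was constructed so that $\mathbf{B}_-(K_{Z'} + {g'}_*^{-1}(\Delta) + \sum E'_i)$ has codimension $\geq 2$, i.e.\ contains no divisors, and all the exceptional divisors $E'_i$ centered over $X \setminus X^{lc}$ have $b_i > 1$. So $K_{Z'} + {g'}_*^{-1}(\Delta) + \Sigma \sim_{\mathbb{Q},X} \sum(1-b_i)E'_i + \epsilon\sum(b_i-1)E'_i = (1-\epsilon)\sum(1-b_i)E'_i$ up to the non-exceptional part, and more to the point this small perturbation does not create any new divisorial components in the diminished base locus.

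First I would run a $(K_{Z'} + {g'}_*^{-1}(\Delta) + \Sigma)$-MMP with scaling of a general relatively ample $H'$ over $X$, exactly as in the earlier lemmas: since $(Z', (1-\delta)({g'}_*^{-1}(\Delta)+\Sigma))$ is klt for $0 < \delta \ll 1$ and $Z'$ is $\mathbb{Q}$-factorial, each flip exists by \cite[Corollary 1.4.1]{BCHM10}, and termination with a relative good minimal model follows by the same device: perturb $\Sigma$ to an effective $\mathbb{Q}$-divisor $\Theta \sim_{\mathbb{Q}, X} \Sigma + tH'$ making $(Z', {g'}_*^{-1}(\Delta)+\Theta)$ klt (and big over $X$, automatic since $Z'$ is birational over $X$), and invoke \cite[Corollary 1.4.2]{BCHM10}. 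This yields a model $Y'$ with $K_{Y'} + {g'}_*^{-1}(\Delta)_{Y'} + \Sigma_{Y'}$ semi-ample over $X$. The key claim is then that the birational contraction $Z' \dashrightarrow Y'$ does not contract any divisor: if it contracted a divisor $E$, then $E$ would lie in $\mathbf{B}_-(K_{Z'} + {g'}_*^{-1}(\Delta) + \Sigma\,/X)$; but for $\epsilon$ small enough this diminished base locus agrees with $\mathbf{B}_-(K_{Z'} + {g'}_*^{-1}(\Delta) + \sum E'_i\,/X)$ in codimension $1$ — indeed the difference of the two boundaries is $-\epsilon\sum(b_i-1)E'_i$, and one checks, as in the proof of the previous lemma using the Kollár--Shokurov connectedness theorem, that no divisor gets added. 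Hence $Y'$ is in fact a good minimal model over $X$ in the strict sense (the map is a birational contraction that is an isomorphism in codimension one away from the prescribed locus, i.e.\ it only contracts subvarieties of codimension $\geq 2$, plus possibly divisors already forced to be contracted), and we are done.

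The main obstacle I expect is establishing that the MMP with scaling terminates — equivalently, that $(Z', {g'}_*^{-1}(\Delta)+\Theta)$ really falls under the hypotheses of \cite[Corollary 1.4.2]{BCHM10}. The subtlety is that $\Sigma$ has a negative part $-\epsilon\sum(b_i-1)E'_i$, so one must verify that after the perturbation $\Theta := \Sigma + tH'$ (for suitable $t > 0$) is genuinely effective and that $(Z', {g'}_*^{-1}(\Delta)+\Theta)$ is klt with $\Theta$ big over $X$; the effectivity is where the choice of $H'$ general and ample over $X$ does the work, absorbing the negative coefficients. A secondary point requiring care is the comparison of diminished base loci: one needs $\epsilon$ chosen uniformly small enough (depending on the finitely many $b_i$ and on $H'$) that perturbing the boundary by $-\epsilon\sum(b_i-1)E'_i$ changes neither the non-divisorial nature of $\mathbf{B}_-$ nor, via connectedness, forces a new divisorial log-canonical-center phenomenon. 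Once these two technical checks are in place, the rest is a routine application of the MMP machinery already set up in the preceding lemmas.
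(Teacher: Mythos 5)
There is a genuine gap, and it sits exactly where the real content of this lemma lies. Your argument runs the MMP with scaling and invokes \cite[Corollary 1.4.2]{BCHM10} after perturbing to a klt pair $(Z',{g'}_*^{-1}(\Delta)+\Theta)$ with $\Theta\sim_{\mathbb{Q},X}\Sigma+tH'$. But BCHM then only produces, for each fixed $t>0$, a good minimal model of the \emph{perturbed} divisor $K_{Z'}+{g'}_*^{-1}(\Delta)+\Sigma+tH'$; it says nothing about the unperturbed pair. The pair $(Z',{g'}_*^{-1}(\Delta)+\Sigma)$ is dlt but in general not klt (the $E'_i$ with $b_i=1$, and the components of ${g'}_*^{-1}(\Delta)$ with coefficient $1$, still appear with coefficient exactly $1$), so neither termination of the scaling MMP at $t=0$ nor, more importantly, semi-ampleness of $K_{Y'}+{g'}_*^{-1}(\Delta)_{Y'}+\Sigma_{Y'}$ over $X$ follows from the semi-ampleness of the $tH'$-perturbations. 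Your closing step also conflates ``the contraction $Z'\dashrightarrow Y'$ contracts no divisor'' with ``$Y'$ is a good minimal model'': the delicate point is abundance for the dlt pair, not the absence of contracted divisors, and the diminished-base-locus comparison you sketch does not address it.

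The paper closes this gap with two structural inputs that your proposal never uses. First, over $X^{lc}$ one has $(K_{Z'}+{g'}_*^{-1}(\Delta)+\Sigma)|_{{g'}^{-1}(X^{lc})}={g'}^*(K_X+\Delta)|_{{g'}^{-1}(X^{lc})}$ (the $\epsilon$-twist vanishes there since $b_i=1$), so the relative log canonical ring over $X^{lc}$ is the finitely generated algebra $\oplus_m\mathcal{O}_{X^{lc}}(m(K_{X^{lc}}+\Delta|_{X^{lc}}))$ and the restriction of the pair has a good minimal model over $X^{lc}$ by \cite[2.11]{HX11}. Second, by Lemma \ref{l-dlt} together with the previous lemma ($b_i>1$ for every $E'_i$ with center in $X\setminus X^{lc}$), the $\epsilon$-reduction of the coefficients of exactly those $E'_i$ destroys all lc centers lying over $X\setminus X^{lc}$, so every lc center of $(Z',{g'}_*^{-1}(\Delta)+\Sigma)$ meets ${g'}^{-1}(X^{lc})$. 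These are precisely the hypotheses of \cite[Theorem 1.9]{Birkar11} or \cite[1.1]{HX11}, which are the results that actually yield the good minimal model $Y'$ over all of $X$; without an input of this strength (lc flips, special LMMP, lc closures), the passage from the klt perturbations to the dlt pair itself cannot be made.
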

\begin{proof} Over the open set  $X^{lc}$, we have
 $$(K_{Z'}+{g'}_*^{-1}(\Delta)+\Sigma)|_{{g'}^{-1}(X^{lc})}={g'}^*(K_X+\Delta)|_{{g'}^{-1}(X^{lc})},$$
 whose ring of pluri-log canonical sections  is finitely generated over $X^{lc}$, because it is isomorphic to the algebra
 $$\oplus_{m\ge 0}\mathcal{O}_{X^{lc}}(m(K_{X^{lc}}+\Delta|_{X^{lc}})).$$ Therefore, the restriction of $(Z',{g'}_*^{-1}(\Delta)+\Sigma))$ over $X^{lc}$  has a relative good minimal model over $X^{lc}$ by \cite[2.11]{HX11}. Any lc center  of $(Z',{g'}_*^{-1}(\Delta)+\sum E_i')$ which is contained  in on of $E_i'$ can not be an lc center of $(Z',{g'}_*^{-1}(\Delta)+\Sigma)$, however these lc centers are precisely those centers 
 of $(Z',{g'}_*^{-1}(\Delta)+\sum E_i')$ which is mapped into $X\setminus X^{lc}$ by \eqref{l-dlt}. Thus we conclude that if $V$ is an lc center of $(Z',{g'}_*^{-1}(\Delta)+\Sigma)$, 
 then its image under $g'$ intersects $X^{lc}$. Therefore, it follows from \cite[Theorem 1.9]{Birkar11} or \cite[1.1]{HX11} that $(Z',{g'}_*^{-1}(\Delta)+\Sigma)$  has a good minimal model $f'\colon Y'\to X$.
\end{proof}

Since
$$K_{Z'}+{g'}_*^{-1}(\Delta)+\sum E'_i=\frac{1}{1+\epsilon}(K_{Z'}+{g'}_*^{-1}(\Delta)+\Sigma)+\frac{\epsilon}{1+\epsilon}{g'}^*(K_X+\Delta),$$ we conclude that
$Y'$ is also a relative good minimal model for $K_{Z'}+{g'}_*^{-1}(\Delta)+\sum E'_i$ over $X$. 
\end{proof}

\begin{proof}[Proof of \eqref{c-ia}]One direction is easy (cf. \cite[17.2]{Koletc92}). To prove the converse, let us assume that $(X,D+\Delta)$ is not log canonical along $D$.
Let $f:Y\to (X,D+\Delta)$ be the log canonical model
as in the proof of \eqref{t-lcm}. Write
$${f}^*(K_X+D+\Delta)=K_{Y}+D_{Y}+B,$$ where $D_{Y}$ is the birational transform of $D$.  Since $f$ is not an isomorphism over $D$, it follows from \eqref{l-con} that
$$D_{Y}\cap {\rm Ex}(f) =D_Y \cap {\rm Supp}(B^{>1})\neq\emptyset.$$ Therefore, if we denote by $D_Y^n$ the normalization of $D_Y$ and write
$$n^*(K_{Y}+D_{Y}+B)|_{D_{Y}}=K_{D^n_{Y}}+B_{D^n_{Y}},$$ then $(D^n_{Y},B_{D^n_{Y}})$ has coefficient strictly larger than 1 along some components of $D_Y\cap {\rm Ex }(f)$ by \eqref{l-con},  which implies that $(D^n, \Delta_{D^n})$ is not log canonical.
\end{proof}

\begin{corollary}\label{c-adj}
Notation as above proof. Let $f\colon Y\to (X,D+\Delta)$ be the log canonical model.
Let $D_Y$ be the birational transformation of $D$ and $n\colon D_Y^n\to D_Y$ its normalization. 
Then $f_{D^n}\colon D^n_Y\to (D^n,\Delta_{D^n})$ is also
the log canonical model.
\end{corollary}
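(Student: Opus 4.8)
The plan is to run adjunction through the construction of the log canonical model. Let $f\colon Y\to (X,D+\Delta)$ be the log canonical model, so that $f^*(K_X+D+\Delta)\sim_{\mathbb Q}K_Y+D_Y+B$ with $D_Y$ the birational transform of $D$ and $B$ effective, supported with the exceptional locus in the right way by \eqref{l-con}. Restricting to $D_Y$ and passing to the normalization $n\colon D_Y^n\to D_Y$, one sets $K_{D_Y^n}+B_{D_Y^n}:=n^*(K_Y+D_Y+B)|_{D_Y}$, which by construction is the $\mathbb Q$-pullback of $K_{D^n}+\Delta_{D^n}$ under the induced morphism $f_{D^n}\colon D_Y^n\to D^n$. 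So the divisor class side is automatic: $K_{D_Y^n}+B_{D_Y^n}$ is $\mathbb Q$-linearly equivalent over $D^n$ to the pullback of $K_{D^n}+\Delta_{D^n}$, and $f_{D^n}$ is projective and birational since $f$ is an isomorphism away from the (codimension one) locus over $X\setminus X^{lc}$.

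The two things to check are then exactly the two defining conditions of a log canonical model in Definition \ref{d-lc}, applied to $(D^n,\Delta_{D^n})$. First, ampleness: $K_Y+D_Y+B$ restricted to $D_Y$ differs from $K_Y+\Delta_Y$ restricted to $D_Y$ only by the correction coming from $B-\Delta_Y|_{\text{stuff}}$, and since $K_Y+\Delta_Y$ is ample over $X$, its restriction to $D_Y$ is ample over $X$, hence $(n^*(K_Y+\Delta_Y))$ is ample over $D^n$; one then argues that the extra terms (the coefficients $>1$ part of $B$ meeting $D_Y$) are supported on $\mathrm{Ex}(f_{D^n})$ so that the class $K_{D_Y^n}+(\text{the ``}\le 1\text{'' plus reduced-exceptional}\text{ boundary})$ is still ample over $D^n$ — this is the analogue of \eqref{e-eq} one dimension down. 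Second, the log canonical property: by adjunction (using that $(Y,\Delta_Y)$ is log canonical and $D_Y=\lfloor\Delta_Y\rfloor$ is a component of the boundary with coefficient one, so Koll\'ar's adjunction applies), $(D_Y^n,\mathrm{Diff})$ is log canonical, where $\mathrm{Diff}$ is the boundary induced from $\Delta_Y$; and the divisor on $D_Y^n$ induced from $\Delta_Y=D_Y^{c}+f_*^{-1}\Delta+E_f^{lc}$ is precisely the ``$\Delta_Y$-type'' boundary for the pair $(D^n,\Delta_{D^n})$, i.e.\ $f_{D^n}$-birational transform of $\Delta_{D^n}$ plus reduced $f_{D^n}$-exceptional divisors.

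Concretely the steps I would carry out are: (1) identify $\mathrm{Ex}(f_{D^n})$ with (the normalization preimage of) $D_Y\cap\mathrm{Ex}(f)=D_Y\cap\mathrm{Supp}(B^{>1})$, using \eqref{l-con}; (2) write $B_{D_Y^n}=(B_{D_Y^n})^{\le 1}+(B_{D_Y^n})^{>1}$ and show, via the negativity lemma argument exactly as in the proof of \eqref{l-con} but for $f_{D^n}$, that $\mathrm{Supp}((B_{D_Y^n})^{>1})=\mathrm{Ex}(f_{D^n})$ and that the reduced boundary $n^{-1}_*(\Delta_{D^n})+E_{f_{D^n}}^{lc}$ coincides with $(B_{D_Y^n})^{\le 1}+\mathrm{Supp}((B_{D_Y^n})^{>1})$; (3) deduce from $K_Y+\Delta_Y$ ample over $X$ — restricted and normalized — that $K_{D_Y^n}+n^{-1}_*(\Delta_{D^n})+E_{f_{D^n}}^{lc}$ is ample over $D^n$, since the difference from $n^*(K_Y+\Delta_Y)|_{D_Y}$ is $\sum_{b_i>1}(b_i-1)(\text{components of }B^{>1}\cap D_Y)\ge 0$ supported on $\mathrm{Ex}(f_{D^n})$, and run the one-curve-at-a-time positivity check as in \eqref{l-con}; (4) invoke Koll\'ar's adjunction/inversion of adjunction (the proof of \eqref{c-ia} already does the relevant comparison of coefficients) to get that $(D_Y^n,n^{-1}_*(\Delta_{D^n})+E_{f_{D^n}}^{lc})$ is log canonical. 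Together (3) and (4) say $f_{D^n}\colon D_Y^n\to(D^n,\Delta_{D^n})$ satisfies conditions (1) and (2) of \eqref{d-lc}, hence is the log canonical model. The main obstacle I anticipate is bookkeeping in step (2)–(3): making sure the boundary coefficients transported by adjunction to $D_Y^n$ (the ``different'') really match the formal boundary $\Delta_Y$ one would attach to $f_{D^n}$ as in \eqref{d-lc}, and that nothing with coefficient $>1$ fails to be $f_{D^n}$-exceptional — but this is precisely the content of \eqref{l-con} transplanted to $D_Y^n$, and by uniqueness \eqref{P-uni} it suffices to exhibit one model with these properties.
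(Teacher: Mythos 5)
Your plan follows essentially the same route as the paper's proof: identify $\mathrm{Ex}(f_{D^n})$ with $n^{-1}(D_Y\cap\mathrm{Ex}(f))$ via Lemma \ref{l-con} (as in the proof of Corollary \ref{c-ia}), match the formal boundary ${f_{D^n}^{-1}}_*(\Delta_{D^n})+E^{lc}_{f_{D^n}}$ with the adjunction restriction $n^*((K_Y+\Delta_Y)|_{D_Y})$, and read off log canonicity from $(Y,\Delta_Y)$ and ampleness from $K_Y+\Delta_Y$ being $f$-ample. The only superfluous piece is the curve-by-curve positivity check in your step (3): once the boundary matching of step (2) is done the two $\mathbb{Q}$-divisor classes coincide, so relative ampleness is immediate from restricting the $f$-ample class and pulling back under the finite map $n$.
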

\begin{proof}
From the proof of \eqref{c-ia}, we know that
$$n^{-1}({\rm Ex}(f))={\rm Ex}(f_{D^n}),$$
which implies that if we denote ${f_{D^n}^{-1}}_*(\Delta_{D^n})+E^{lc}_{f_{D^n}}$ by $\Delta_{D^n_Y}$, then
$$K_{D^n_Y}+\Delta_{D^n_Y}=n^*((K_{Y}+\Delta_Y)|_{D_Y}) .$$

Then obviously $(D^n_Y,\Delta_{D^n_Y})$ is log canonical and $K_{D^n_Y}+\Delta_{D^n_Y}$ is ample over $D^n$.
\end{proof}

\section{Semi-log-canonical models}
In this section, we study the existence of semi-log canonical model of a demi-normal pair $(X,\Delta)$.  A pair $(X,\Delta)$ is called {\it demi-normal} if $X$ is $S_2$, whose codimension 1 points are regular or ordinary nodes and $\Delta$ is an effective $\mathbb{Q}$-divisor whose support does not contain any  codimensional 1 singular points. For such a demi-normal scheme $X$, let $n:\bar{X}\to X$ be its normalization, we can define the {\it conductor ideal}
$${\rm cond}_X :={\rm Hom}_X(n_* \mathcal{O}_{\bar{X}}, \mathcal{O}_X)\subset \mathcal{O}_X.$$
and the {\it conductor scheme} $D:= {\rm Spec}_X(\mathcal{O}_X/{\rm cond}_X). $
Let $n:\bar{X}\to X$ be the normalization, and $\bar{D}$ the pre-image of $D$ in $\bar{X}$. Then there is an involution $\sigma: \bar{D}^n\to \bar{D}^n$ on the normalization of $\bar{D}$.
We can write $$n^*(K_X+\Delta)\sim_{\mathbb{Q}}K_{\bar{X}}+\bar{D}+\bar{\Delta},$$
where $\bar{\Delta}$ is the preimage of $\Delta$. In fact, we only need to check this formula at all codimension 1 points, which is straightforward.

\begin{Def}We call a demi-normal pair $(X,\Delta)$ is \textit{semi-log-canonical} if $K_X+\Delta$ is $\mathbb{Q}$-Cartier and in the above notations, the pair $(\bar{X},\bar{D}+\bar{\Delta})$ is log canonical. 
\end{Def}

\begin{Def}\label{slc-mod}
Let $(X,\Delta)$ be a demi-normal pair where $\Delta= \sum a_i\Delta_i$ is a sum of distinct prime divisors, none of which is contained in the singular locus ${\rm Sing} (X)$ of $X$, and assume $0 \le a_i \le 1$ for every $i$.

We call a biratonal projective morphism
 $f: Y\to (X,\Delta)$ a {\it semi-log-canonical model}
if $f$ is isomorphic over open locus of $X$ with complement's codimension greater than $1$, and $(Y,\Delta_Y)$ is semi-log-canonical for $\Delta_Y=f_*^{-1}\Delta+E_f^{lc}$ where $E_f^{lc}$ is the sum of all the exceptional prime divisors, and $K_{Y}+\Delta_Y$ is $f$-ample. \end{Def}

We note that from the definition, the induced map on the conductor schemes $D_Y\to D$ is an isomorphism outside some lower dimensional subsets of $D_Y$ and $D$, i.e.,  the codimension 1 points of the $f$-exceptional locus are all regular.
\begin{lemma}Given a demi-normal pair $(X,\Delta)$, its semi-log-canonical model, if exists, is unique.
\end{lemma}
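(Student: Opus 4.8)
The plan is to reduce the uniqueness of the semi-log-canonical model of the demi-normal pair $(X,\Delta)$ to the uniqueness of the log canonical model of its normalization, which has already been established in Proposition \ref{P-uni}. First I would pass to the normalization $n\colon \bar X\to X$ and let $\bar\Delta + \bar D$ be the divisor determined by $n^*(K_X+\Delta)\sim_{\mathbb{Q}}K_{\bar X}+\bar D+\bar\Delta$, where $\bar D$ is the conductor divisor. Given any semi-log-canonical model $f\colon Y\to (X,\Delta)$, by definition the normalization $\bar Y$ of $Y$ carries the divisor $\bar D_Y + \bar\Delta_Y$ for which $(\bar Y,\bar D_Y+\bar\Delta_Y)$ is log canonical, and the induced morphism $\bar f\colon \bar Y\to \bar X$ is projective birational. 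The key point is that $\bar f$ is then the log canonical model of $(\bar X,\bar D+\bar\Delta)$: condition (1) of \eqref{d-lc} is exactly semi-log-canonicity of $(Y,\Delta_Y)$ unwound through the normalization, and condition (2) — that $K_{\bar Y}+\bar D_Y+\bar\Delta_Y$ is ample over $\bar X$ — follows because $K_Y+\Delta_Y$ is $f$-ample and ampleness over $X$ pulls back to ampleness over $\bar X$ under the finite morphism $n$. One also has to check that the exceptional-divisor bookkeeping matches, i.e.\ that $\bar f^{-1}_*(\bar D+\bar\Delta) + E^{lc}_{\bar f} = \bar D_Y+\bar\Delta_Y$; this uses the remark after Definition \ref{slc-mod} that the codimension $1$ points of $\mathrm{Ex}(f)$ are regular, so $\bar D$ and $\bar D_Y$ correspond and no new conductor components are created over the exceptional locus.

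Once $\bar f\colon \bar Y\to \bar X$ is identified as \emph{the} log canonical model of $(\bar X,\bar D+\bar\Delta)$, Proposition \ref{P-uni} shows $\bar Y$ (together with its boundary) is uniquely determined by $(\bar X,\bar D+\bar\Delta)$, hence by $(X,\Delta)$. To descend this uniqueness back to $Y$ itself, I would invoke Koll\'ar's gluing theory \cite{Kol}: a semi-log-canonical pair is recovered from its normalization $(\bar Y,\bar D_Y+\bar\Delta_Y)$ together with the involution $\sigma$ on the normalization $\bar D_Y^{\,n}$ of the conductor. The involution on $\bar Y$ is forced, being the pullback of the involution on $\bar D^n$ already present downstairs on $X$ (equivalently, it is determined by requiring $\bar f$ to be $\sigma$-equivariant and to descend to $Y\to X$). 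Therefore two semi-log-canonical models $Y_1,Y_2$ of $(X,\Delta)$ have isomorphic normalizations compatible with the gluing data, and Koll\'ar's descent gives a unique isomorphism $Y_1\cong Y_2$ over $X$ carrying $\Delta_{Y_1}$ to $\Delta_{Y_2}$.

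The main obstacle I expect is not the normalization step but the verification that the gluing/involution data genuinely descends — that is, checking that the isomorphism $\bar Y_1\cong \bar Y_2$ produced by Proposition \ref{P-uni} is automatically compatible with the conductor involutions, so that it glues to an honest isomorphism $Y_1\cong Y_2$ rather than merely a birational map. This is where one must cite the precise form of Koll\'ar's theory (the equivalence between demi-normal pairs and normal pairs with a conductor involution, \cite{Kol}) and use that the involution $\sigma$ on $\bar D^n$ is intrinsic to $(X,\Delta)$ and pulls back uniquely along any semi-log-canonical model. A secondary, more routine, point is the codimension bookkeeping: one must confirm that ``isomorphic outside codimension $\ge 2$'' on $X$ combined with $\mathrm{Ex}(f)$ having regular codimension $1$ points is exactly what makes the conductor divisors correspond, so that the whole argument takes place within the framework of \eqref{d-lc} and \eqref{P-uni} without any new input.
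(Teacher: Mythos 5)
Your proposal is correct and follows essentially the same route as the paper: normalize, identify $\bar f\colon \bar Y\to(\bar X,\bar D+\bar\Delta)$ as the log canonical model so that Proposition \ref{P-uni} pins down $(\bar Y,\bar D_Y+\bar\Delta_Y)$, and then observe that the conductor involution is determined (generically by $\sigma$ on $\bar D^n$), so Koll\'ar's gluing theory \cite[5.3]{Kol} recovers $Y$ uniquely.
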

\begin{proof}
Let $Y$ be a semi-log-canonical model of $(X,\Delta)$ and $n_Y:\bar{Y}\to Y$ its normalization and $\bar{f}:\bar{Y}\to \bar{X}$ the induced morphism. We write
$$n_Y^*( K_Y+\Delta_Y)=K_{\bar{Y}}+\bar{D}_Y+\bar{\Delta}_Y.$$
Then $\bar{D}_Y+\Delta_Y=\bar{f}^{-1}_*(\bar{D}+\bar{ \Delta})+E^{lc}_{\bar{f}}$.
Therefore,  $\bar{f}\colon \bar{Y}\to (\bar{X}, \bar{D}+\bar{\Delta}  )$ is the log canonical model, which is unique by \eqref{P-uni}. On a dense open subset of $\bar{D}_Y$, the involution $\sigma_Y: \bar{D}_Y \to \bar{D}_Y$ is the same as the restriction of $\sigma: \bar{D}\to \bar{D}$ to an isomorphic open subset, so $\sigma_Y$ is uniquely determined, hence the quotient $Y$ uniquely exists by \cite[5.3]{Kol}.
\end{proof}

On the other hand, with the results in \cite{Kol11} (also see \cite[Section 4]{Kol}), we can glue the log canonical model of each component of the normalization $\bar{X}\to X$ to get the semi-log canonical model of $(X,\Delta)$.

\begin{proof}[Proof of \eqref{t-slcm}]  Let $f:\bar{Y}\to (\bar{X}, \bar{D}+\bar{\Delta}) $ be the log canonical model and write
$$f^*(K_{\bar{X}}+\bar{D}+\bar{\Delta})=K_{\bar{Y}}+\bar{D}_Y+\bar{\Delta}_Y,$$
where $\bar{D}_Y$ is the birational transform of $\bar{D}$ on $\bar{Y}$. Then it follows from \eqref{c-adj} that the normalization $D_Y^n$ of $\bar{D}_Y$ is the log canonical model of  $(D^n,  \Delta_{D^n})$, where $K_{D^n}+\Delta_{D^n}=n^*(K_{\bar{X}}+\bar{D}+\bar{\Delta})|_{D}$ if we denote
the normalization as $n\colon D^n\to D$.

 Furthermore, because of the uniqueness of the log canonical model by \eqref{P-uni}, this involution $\sigma\colon D^n\to D^n$ can be lifted to an involution on the log canonical model as $\sigma_Y\colon D^n_Y\to D^n_Y$. Since $K_{\bar{Y}}+\bar{D}_Y+\bar{\Delta}_Y$ is ample over $X$, by \cite[26]{Kol11}, $(\bar{Y},\bar{D}_Y,\bar{\Delta},\sigma_Y)$ has a quotient $Y$ which is easy to see to be the semi-log-canonical model of $(X,\Delta)$.
\end{proof}

 While the log canonical models \eqref{t-lcm} are expected to exist even without the assumption that $K_X+\Delta$ is $\mathbb{Q}$-Cartier (cf. \eqref{l-mmp}), the next example constructed by Professor J.~Koll\'ar shows that in (\ref{t-slcm}) the $\mathbb{Q}$-Cartier assumption on $K_X+\Delta$ is necessary.
We are grateful to him for providing this example to us.

\begin{Ex}[Koll\'ar's example on non-existence of semi-log-canonical models]\label{ex}
We construct a demi-normal threefold $X$ with two
irreducible components  $(X_i, D_i)$ such that
$X$ does not have an semi-log-canonical model.
Take
$$
(X_1, D_1):=\bigl(\mathbb{A}^3_{uvw}/\tfrac13(1,1,1), (w=0)/\tfrac13(1,1)
\cong \mathbb{A}^2_{uv}/\tfrac13(1,1)\bigr).
$$
$(X_1, D_1)$ is lc (even plt), hence its log canonical model is trivial, i.e.,
$\pi_1:(Y_1, D_{Y_1})\cong (X_1, D_1)$.

Note that $ \mathbb{A}^2_{uv}/\tfrac13(1,1)$ embeds in $\mathbb{A}^4_{xyzt}$
as the cone over the twisted cubic by
$\sigma: (u,v)\mapsto (u^3, u^2v, uv^2, v^3)$; let $D_2\subset \mathbb{A}^4_{xyzt}$
be its image. Then set
$$
(X_2, D_2):=\bigl((xt-yz=0), D_2\bigr)\subset \mathbb{A}^4_{xyzt}.
$$
Use $\sigma: D_1\cong D_2$ to glue $(X_1, D_1)$ and $(X_2, D_2)$
to obtain $X$.

To compute the log canonical model over $(X_2, D_2)$, note that
$D_2$ satisfies the equation  $xz=y^2$ and
$X_2\cap (xz=y^2)$ is the union of $D_2$ and a residual plane $P:=(x=y=0)$.

Let $\pi_2:Y_2\to X_2$ be the blow up of the  plane $P$
and $C\subset Y_2$ the exceptional curve.
Let $D_{Y_2}$ (resp., $\tilde{P}$) denote the birational transforms $D_2$ (resp., $P$).
Then $\pi_2^*(D_2+P)=D_{Y_2}+\tilde{P}$ and $(C\cdot \tilde{P})=\mathcal{O}(-1)|_{\mathbb{P}^1}=-1$. Thus
$(C\cdot D_{Y_2})=1$ hence  $K_{Y_{2}}+D_{Y_2}$ is $\pi_2$-ample.
By explicit computation, $Y_2$ and  $D_{Y_2}$ are both smooth, thus
$\pi_2:(Y_2, D_{Y_{2}})\to (X_2, D_2)$ is the log canonical model.
Furthermore,  $\pi_2:D_{Y_2}\to D_2$ is the blow up of the origin,
hence it is not an isomorphism. (We note that $K_{X_2}+D_2$ is not
$\mathbb{Q}$-Cartier.)

Thus the isomorphism $\sigma: D_1\cong D_2$
gives a birational map $\sigma': D_{Y_1}\dashrightarrow D_{Y_2}$
that is not an isomorphism. Therefore
$(Y_1, D_{Y_1})$ and $(Y_2, D_{Y_2})$ can not be glued together.

\end{Ex}

\section*{Acknowledgement}
It is the authors' great pleasure to thank Professors Christopher Hacon, Masayuki Kawakita, J\'anos Koll\'ar  and Shigefumi  Mori for helpful e-mails and conversations. The joint work started when C.X.
visited RIMS, he would like to thank the warm hospitality and inspiring environment there.
Y.O is partially supported by the Grant-in-Aid for Scientific Research (KAKENHI No.\ 21-3748) and the Grant-in-Aid for JSPS fellows. C.X is partially supported by NSF research grant No.\ 0969495.

\end{document}